\documentclass[12pt,reqno]{amsart}

\usepackage{amsfonts,amssymb,amsthm}
\usepackage{mathdots}
\usepackage[all]{xy}
\usepackage{blkarray,longtable}
\usepackage{mathrsfs,calligra}
\usepackage[bbgreekl]{mathbbol}
  \DeclareSymbolFontAlphabet{\mathbb}{AMSb}
  \DeclareSymbolFontAlphabet{\mathbbl}{bbold}
\usepackage{xcolor}
\usepackage{graphicx} 
\usepackage{here,float}

\usepackage{enumitem}

\usepackage{pgf}

\usepackage[vcentermath,enableskew]{youngtab}
\usepackage{ytableau}
\ytableausetup{centertableaux,nobaseline}

\usepackage[
backend=biber,
style=ieee-alphabetic,
sorting=nyt,
url=false,
isbn=false,
eprint=false,
doi=false
]{biblatex}
\makeatletter
\def\input@path{{./}}
\graphicspath{{./figures/}}
\makeatother

\newcommand{\version}{Ver.~0.0}
\newcommand{\setversion}[1]{\renewcommand{\version}{Ver.~{#1}}}
\setversion{0.0 [2024/05/21 18:42:41 JST]}
\setversion{0.7 [2025/03/11 16:28:42 JST]}
\setversion{0.8 [2025/08/04 17:48:44 JST]}
\setversion{0.9 [2025/08/13 16:37:15 JST]}
\setversion{0.99 [2026/09/04 10:14:59 JST]}
\setversion{0.999 [2026/09/09 07:34:06 JST]}

\title{Double flag varieties of Levi type with a finite number of orbits}
\date{\today}

\author{Lucas Fresse and Kyo Nishiyama}
\address{Universit\'e de Lorraine, CNRS, Institut \'Elie Cartan de Lorraine, UMR 7502, Vandoeu\-vre-l\`es-Nancy, F-54506, France}
\email{lucas.fresse@univ-lorraine.fr}
\address{Department of Mathematics, Aoyama Gakuin University, Fuchinobe 5-10-1, Chuo-ku, Saga\-mihara 252-5258, Japan}
\email{kyo.nishiyama@gmail.com}
\thanks{K.~N.~is supported by JSPS KAKENHI Grant Number \#{25K06938}.}

\numberwithin{equation}{section}

\newtheorem{theorem}{Theorem}[section]
\newtheorem{lemma}[theorem]{Lemma}
\newtheorem{proposition}[theorem]{Proposition}
\newtheorem{corollary}[theorem]{Corollary}

\theoremstyle{definition}
\newtheorem{example}[theorem]{Example}

\newtheorem{definition}[theorem]{Definition}
\newtheorem{remark}[theorem]{Remark}

\newtheorem{problem}[theorem]{Problem}

\newcounter{penum}
\newenvironment{penumerate}{%
\par\smallskip
\begin{list}{$\;\;(\thepenum)$}{%
\usecounter{penum}
\setlength{\topsep}{0pt}
\setlength{\partopsep}{0pt}
\setlength{\parsep}{1ex}
\setlength{\itemindent}{0pt}
\setlength{\labelsep}{.5em}
\setlength{\labelwidth}{0pt}
\setlength{\leftmargin}{0pt}
\setlength{\rightmargin}{0pt}
\setlength{\itemsep}{0pt}
}}
{\end{list}\par\medskip}

\newcommand{\skipover}[1]{}

\newcommand{\mbfa}{\mathbf{a}}
\newcommand{\mbfb}{\mathbf{b}}
\newcommand{\mbfc}{\mathbf{c}}
\newcommand{\mbfd}{\mathbf{d}}
\newcommand{\mbfe}{\mathbf{e}}
\newcommand{\mbff}{\mathbf{f}}
\newcommand{\mbfg}{\mathbf{g}}
\newcommand{\mbfh}{\mathbf{h}}
\newcommand{\mbfm}{\mathbf{m}}

\newcommand{\Xfv}{\mathfrak{X}}
\newcommand{\Grass}{\mathrm{Gr}}

\newcommand{\GL}{\mathrm{GL}}

\newcommand{\K}{\mathbb{K}}

\newcommand{\Hom}{\qopname\relax o{Hom}}

\newcommand{\Stab}{\qopname\relax o{Stab}}

\newcommand{\Rep}{\qopname\relax o{Rep}}

\newcommand{\Flags}{\mathscr{F}\!\ell}

\newcommand{\quiver}{\mathscr{Q}}
\newcommand{\bfell}{\boldsymbol{\ell}}

\newcommand{\nopicture}[1]{}

\begin{document}

\begin{abstract}
We consider a double flag variety of the form $L/Q\times G/P$ where $G$ is a connected reductive group and $L$ is a Levi subgroup of $G$.
The Levi subgroup $L$ acts diagonally on this double flag variety and a basic problem is to classify pairs of parabolic subgroups $P\subset G$ and $Q\subset L$ such that there are finitely many orbits for the considered action. We solve this problem in the case where $G$ is the general linear group by using representations of quivers.
\end{abstract}

\maketitle

\section{Introduction}

Let $G$ be a connected reductive algebraic group over $\K$, which (throughout this paper) denotes an algebraically closed field of characteristic zero.
When $P\subset G$ is a parabolic subgroup, the flag variety $G/P$ is a complete homogeneous space, and every complete homogeneous space for $G$ is of this form.

When $P_1,P_2\subset G$ is a pair of parabolic subgroups, the double flag variety $G/P_1\times G/P_2$ is equipped with the diagonal action of $G$. It follows from the Bruhat decomposition that there are finitely many orbits for this action, which are parametrized by double cosets in the Weyl group.

Multiple flag varieties of the form $\prod_{i=1}^\ell G/P_i$ with $\ell\geq 3$ have infinitely many orbits in general.
In the classical cases, Magyar--Weyman--Zelevinsky \cite{MWZ.1999,MWZ.2000} and
Matsuki \cite{Matsuki.2015,Matsuki.arXiv2019} classified multiple flag varieties of finite type, i.e., with a finite number of $G$-orbits. In particular, if $\ell>3$ (and the $P_i$'s are proper parabolic subgroups), there are always infinitely many orbits.

In the present paper, we consider another type of double flag variety, namely
\begin{equation*}
\Xfv=L/Q \times G/P
\end{equation*}
where $L\subset G$ is a Levi subgroup, and $P$ and $Q$ are parabolic subgroups of $G$ and $L$, respectively.
The variety $\Xfv$ is endowed with the diagonal action of $L$,
and our problem is the following.

\begin{problem}\label{problem:classify.MFV.of.finite-type}
For a given $G$,
classify all triples $(L,P,Q)$ such that $\Xfv$ has a finite number of $L$-orbits.
\end{problem}

Note also that there is a natural bijection between $L$-orbits of $\Xfv=L/Q\times G/P$ and $Q$-orbits of $G/P$.
Thus, in Problem \ref{problem:classify.MFV.of.finite-type}, $\Xfv$ will have finitely many $L$-orbits if and only if $G/P$ has finitely many $Q$-orbits.

In the rest of the paper, we focus on the case of type A, where
\begin{equation}
G = \GL_n(\K) , \qquad
L = \prod_{i=1}^m \GL_{p_i}(\K) \quad \text{with} \quad \sum_{i=1}^m p_i = n ,
\end{equation}
and $ L $ is diagonally embedded into $ G $.
We give a complete solution to Problem \ref{problem:classify.MFV.of.finite-type} for $G=\GL_n(\K)$,
and the answer appears in Theorem \ref{T:main_theorem} below.
But before stating the theorem, let us summarize results which are already known on Problem \ref{problem:classify.MFV.of.finite-type}.

For $G=\GL_n(\K)$ (in short, $\GL_n$), every parabolic subgroup of $G$ is up to conjugation the subgroup $P=P_{\mbfd}$ of invertible blockwise upper triangular matrices with diagonal blocks of sizes $d_1,\ldots,d_k$, where $\mbfd=(d_1,\ldots,d_k)$ is some composition of $n$.
Then the flag variety $G/P_{\mbfd}$ can be identified with the variety $\Flags(\mbfd)$ of flags $(F_0\subset F_1\subset \ldots\subset F_k=\K^n)$ with $\dim F_i/F_{i-1}=d_i$ for all $i$. Special cases of flag varieties are the following.
\begin{itemize}
    \item If $\mbfd=(1^n)$ (where $1^n$ means $1$ repeated $n$ times), the variety $G/P=\Flags(1^n)$ consists of full flags
    $(F_i)_{i=0}^n$, with $\dim F_i=i$ for all $i$. In this case, $P=P_{(1^n)}$ is the Borel subgroup $B\subset \GL_n$ of invertible upper triangular matrices.
    \item At the other extreme, for $\mbfd=(d,n-d)$, the variety $G/P$ coincides with the Grassmannian variety $\Grass(d;n)$ of $d$-dimensional subspaces in $\K^n$.
    Correspondingly $P=P_{(d,n-d)}$ has two diagonal blocks and is called a maximal parabolic subgroup.
    \item An even more extreme case is when $\mbfd=(1,n-1)$ or $(n-1,1)$, so that $G/P$ is isomorphic to the projective space $\mathbb{P}^{n-1}$;
    in this case $P=P_{\mbfd}$ is the stabilizer of a line or of a hyperplane, and it is called mirabolic.
\end{itemize}
A parabolic subgroup of $L=\prod \GL_{p_i}$ takes the form
$Q=Q_1\times\cdots\times Q_m$ where $Q_i$ is a parabolic subgroup of $\GL_{p_i}$ for all $i$. Thus $Q_i$ is associated with some composition $\mbfa_i$ of $p_i$, and we have
$$
\Xfv=L/Q\times G/P=\Flags(\mbfa_1)\times\cdots\times\Flags(\mbfa_m)\times\Flags(\mbfd).
$$

If $P=G$ then $\Xfv=L/Q$ consists of one $L$-orbit. In the considerations below, we assume $P\subsetneq G$ and $p_i\geq 1$ for all $i$, thus $m$ is the number of diagonal blocks of $L$.
\begin{itemize}
    \item[\rm (a)]
    When $P$ is mirabolic, that is, when $G/P$ is a projective space, then $G/P$ has finitely many $T$-orbits for any maximal torus $T\subset G$, thus it has a fortiori finitely many $Q$-orbits for any parabolic subgroup $Q\subset L$ because $Q$ contains a maximal torus of $G$. Therefore, $\Xfv$ has always finitely many $L$-orbits in this case, whatever are $L$ and $Q$.
    \item[\rm (b)]
    Duckworth \cite{Duckworth} classified the pairs $(L,P)$, formed by a Levi subgroup and a parabolic subgroup of $G$, such that $L$ has finitely many orbits on $G/P$. His result concerns all classical groups but we focus here on the case of $G=\GL_n$.
In this case he obtained that $G/P$ has finitely many $L$-orbits if and only if $m\leq 2$ or ($m=3$ and $P$ is maximal) or ($m\geq 4$ and $P$ is mirabolic). This implies that $\Xfv$ will a fortiori have infinitely many $L$-orbits if $m\geq 4$ and $P$ is not mirabolic, or $m\geq 3$ and $P$ is not maximal.
    \item[\rm (c)] When $L$ has only $m=2$ blocks, $L$ is a symmetric subgroup of $G$ and the pair $(G,L)$ is called a symmetric pair of type AIII.
This case is special but one of most interesting cases.  A complete classification of the pair $ (P, Q) $, which gives solutions to
Problem \ref{problem:classify.MFV.of.finite-type} is obtained in \cite[Corollary 8.6]{Fresse.Nishiyama.Overview.2023}, based on results of \cite{Homma.2021}; see also Table \ref{table:finite.type.m=2} below.
    \item[\rm (d)] Finally, if $L$ has only one block, i.e., $L=G$, then $\Xfv=G/Q\times G/P$ becomes a double flag variety for $G$, and it has finitely many orbits due to the Bruhat decomposition.
\end{itemize}
Based on these observations,
in order to answer Problem \ref{problem:classify.MFV.of.finite-type} for $G=\GL_n$, it remains to consider the case where $L$ has $m=3$ blocks and $G/P=\Grass(d;n)$ with $2\leq d\leq n-2$.
Our main task in the present paper is to settle this case, which is done in Section \ref{section:classification.proofs}; specifically, our classification result regarding this case is Theorem \ref{Thm:classification.finite.type.MFV} therein, and we reproduce it in part (C) of Theorem \ref{T:main_theorem} below. By compiling Theorem \ref{Thm:classification.finite.type.MFV} with (a)--(d) above, we indeed obtain the next theorem which answers Problem \ref{problem:classify.MFV.of.finite-type}.

\begin{theorem}\label{T:main_theorem}
Let $G=\GL_n$, a Levi subgroup $L=\prod_{i=1}^m \GL_{p_i}$ with $p_i\geq 1$, $\sum_{i=1}^m p_i=n$, and parabolic subgroups
$P\subsetneq G$ and $Q=\prod_{i=1}^m Q_i\subset L$ with $Q_i\subset \GL_{p_i}$.
The double flag variety $\Xfv=L/Q\times G/P$ has a finite number of $L$-orbits if and only if one of the following conditions is satisfied.
\begin{itemize}
\item[\rm (A)] $m=1$, that is, $L=G$.
\item[\rm (B)] $m=2$ and the triple $(P,Q_1,Q_2)$ appears (up to switching the roles of $Q_1$ and $Q_2$) in Table \ref{table:finite.type.m=2}.
\item[\rm (C)] $m=3$, $P$ is maximal (i.e., $G/P$ is a Grassmannian variety), and the quadruple $(P,Q_1,Q_2,Q_3)$ appears (up to switching the roles of $Q_1,Q_2,Q_3$) in Table \ref{table:finite.type.m=3}.
\item[\rm (D)] $m\geq 4$ and $P$ is mirabolic, i.e., $G/P$ is a projective space.
\end{itemize}
\end{theorem}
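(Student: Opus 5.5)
The theorem is a compilation of the cases (a)--(d) listed before it, together with the $m=3$ Grassmannian classification (Theorem \ref{Thm:classification.finite.type.MFV}), so the proof is a case analysis on the number $m$ of blocks of $L$. Throughout we use that $L$-orbits on $\Xfv$ correspond bijectively to $Q$-orbits on $G/P$. We also use that finiteness is monotone: if $Q\subset Q'$ and $Q$ has finitely many orbits on $G/P$, then so does $Q'$. Conversely, if already $L$ (the case $Q=L$) has infinitely many orbits on $G/P$, then so does every $Q\subset L$.

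The cases $m=1$ and $m=2$ are direct.
\begin{itemize}
\item For $m=1$, (d) gives finiteness by the Bruhat decomposition, which is condition (A).
\item For $m=2$, the pair $(G,L)$ is symmetric of type AIII. Here the equivalence with (B) is exactly the classification quoted in (c), namely \cite[Corollary 8.6]{Fresse.Nishiyama.Overview.2023} as recorded in Table \ref{table:finite.type.m=2}.
\end{itemize}
We must check that Table \ref{table:finite.type.m=2} lists all finite-type triples, including the mirabolic ones, which are finite by (a). I expect this to hold, since the cited classification is complete.

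For $m\ge 4$ we argue both directions.
\begin{itemize}
\item If $P$ is mirabolic, $G/P\cong\mathbb{P}^{n-1}$ has finitely many $T$-orbits, where $T$ is the diagonal torus. Since $T\subset Q$, there are finitely many $Q$-orbits, as in (a).
\item If $P$ is not mirabolic, Duckworth's result (b) shows that $L$ already has infinitely many orbits on $G/P$. By monotonicity, so does $Q$.
\end{itemize}
This gives (D).

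For $m=3$ we again argue both directions.
\begin{itemize}
\item If $P$ is not maximal, Duckworth's result shows that $L$-orbits on $G/P$ are infinite, hence so are $Q$-orbits. So $P$ must be maximal, with $G/P=\Grass(d;n)$.
\item If $d\in\{1,n-1\}$, then $P$ is mirabolic and finiteness holds by (a). These quadruples must appear in Table \ref{table:finite.type.m=3}, and I would check this directly from the table.
\item If $2\le d\le n-2$, the equivalence with membership in Table \ref{table:finite.type.m=3} is precisely Theorem \ref{Thm:classification.finite.type.MFV}.
\end{itemize}
This gives (C).

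The only real obstacle is that last ingredient, Theorem \ref{Thm:classification.finite.type.MFV}, which is proved later via quiver representations. Finite type corresponds to a quiver with a central vertex of dimension $n$, three arms encoding the flags of the $Q_i$ together with the inclusions $\K^{p_i}\to\K^n$, and one more arm for the subspace. Finiteness is then decided by Gabriel/Kac-type criteria, or by exhibiting one-parameter families of orbits. At the level of the present theorem, the remaining work is bookkeeping: confirming that the tables are stated up to permutation of the $Q_i$, and that the degenerate cases ($P$ mirabolic, or some $p_i$ small) are consistent with them.
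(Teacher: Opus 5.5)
Your proposal is correct and is essentially the paper's own argument. Theorem~\ref{T:main_theorem} is obtained by splitting according to $m$ and compiling observations (a)--(d) with Theorem~\ref{Thm:classification.finite.type.MFV} for $m=3$ and $P$ maximal; (a)--(d) are the Bruhat decomposition for $m=1$, the AIII classification for $m=2$, Duckworth's theorem together with the fact that $Q$-orbits refine $L$-orbits, and the torus argument for mirabolic $P$. The checks you left hedged are immediate: a mirabolic $P$ has two blocks, so it falls under the row ``maximal / any / any'' of Table~\ref{table:finite.type.m=2}, and $d\in\{1,n-1\}$ falls under the row $\min\{d_1,d_2\}\leq 2$ of Table~\ref{table:finite.type.m=3} (Theorem~\ref{Thm:classification.finite.type.MFV} in fact covers every $d$, and its cases (a)--(d4) translate row by row into that table).
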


\begin{table}[htbp]
\caption{Double flag varieties of finite type for $m=2$}\label{table:finite.type.m=2}
%
\begin{tabular}{||c|c|c||}
\hline
$P$ & $Q_1$  &  $Q_2$ \\ \hline \hline
maximal & any & any \\ \hline
3 blocks & maximal & $\leq$ 4 blocks \\ \hline
3 blocks & $\GL_{p_1}$ or mirabolic & any \\ \hline
3 blocks, one of size $2$ & $\GL_{p_1}$ or maximal  & any \\ \hline
3 blocks, one of size $1$ & any & any \\ \hline
4 blocks & $\GL_2$ & any \\ \hline
4 blocks & $\GL_{p_1}$ & $\leq$ 4 blocks \\ \hline
4 blocks, one of size $1$ & $\GL_{p_1}$ & any \\ \hline
5 or 6 blocks & $\GL_{p_1}$ & maximal \\ \hline
any & $\GL_1$ & any \\ \hline
any & $\GL_2$ & maximal \\ \hline
any & $\GL_{p_1}$ & $\GL_{p_2}$ or mirabolic \\ \hline
\end{tabular}
\hfil
\end{table}

\begin{table}[htbp]
\caption{Double flag varieties of finite type for $m=3$}\label{table:finite.type.m=3}
%
\begin{tabular}{||c|c|c|c||}
\hline
$P=P_{(d_1,d_2)}$ (maximal) & $Q_1$  &  $Q_2$ & $Q_3$ \\ \hline \hline
$\min\{d_1,d_2\}\leq 2$ & any & any & any \\ \hline
$\min\{d_1,d_2\}=3$ & $\GL_{p_1}$ & any & any \\ \hline
$\min\{d_1,d_2\}=4$ or $5$ & $\GL_{p_1}$ & maximal & any \\ \hline
any & $\GL_1$ & any & any \\ \hline
any & $\GL_2$ & maximal & any \\ \hline
any & $\GL_{p_1}$ & $\GL_{p_2}$ or mirabolic & any \\ \hline
any & $\GL_{p_1}$ & maximal & $\leq$ 4 blocks.
 \\ \hline
\end{tabular}
\hfil
\end{table}

\begin{remark}
The tables show some cases where $Q$ can be a Borel subgroup of $L$:
for $P$ maximal or $P$ has three blocks with (one block of size 1 or $p_1=2$) or $Q_1=\GL_1$ in Table \ref{table:finite.type.m=2};
for $\min\{d_1,d_2\}\leq 2$ or $Q_1=\GL_1$ in Table \ref{table:finite.type.m=3}. Then the variety $G/P$ is $L$-spherical. These cases appear  in the classification given in \cite{Avdeev.Petukhov.2014} of pairs $(H,G/P)$ where $H\subset G$ is a reductive subgroup and $G/P$ is a $H$-spherical partial flag variety.
\end{remark}

\subsection*{Outline of the arguments}
As explained above, for completing the proof of Theorem \ref{T:main_theorem}, it remains to consider the case where $L$ has $m=3$ blocks and $G/P=\Grass(d;n)$ with $2\leq d\leq n-2$; hence
$$
L=\GL_p\times \GL_q\times \GL_r,\quad p,q,r\geq 1,\quad p+q+r=n,
$$
and the multiple flag variety under consideration is
\begin{equation}
\label{X:3blocks}
\Xfv=\Flags(\mbfa)\times \Flags(\mbfb)\times \Flags(\mbfc)\times \Grass(d;n) ,
\end{equation}
where $\mbfa,\mbfb,\mbfc$ are compositions of $p,q,r$, respectively.

Our main task in this paper is therefore to classify the quadruples $(\mbfa,\mbfb,\mbfc,d)$ for which $\Xfv$ of (\ref{X:3blocks}) has a finite number of $L$-orbits.
For doing this, inspired by the strategy of Magyar--Weyman--Zelevinsky \cite{MWZ.1999}
(and also Homma \cite{Homma.2021}; see part (c) above),
we use the theory of representations of quivers.
Key ingredient of this approach is a theorem due to Kac \cite{Kac.Quiver.1980} 
on the relation between indecomposable objects in the category of representations of a quiver $\quiver$ and real and imaginary roots of some corresponding Kac--Moody Lie algebra.
As in \cite{MWZ.1999} and \cite{Homma.2021}, our problem is reduced to the problem of classifying isomorphism classes in a suitable subcategory of representations of quivers with injective arrows.

In Section \ref{section:2}, we give a short review of the theory of representations of quivers (in the case of a star-shaped quiver with four branches)
including 
a general statement which establishes
a bijection between orbits and
decompositions of objects in the quiver category.
Main results are summarized into Theorem \ref{theorem:finiteness.criterion}.

In Section \ref{section:classification.proofs},
we obtain a complete classification of multiple flag varieties of the form (\ref{X:3blocks}) which have finitely many orbits
(Theorem \ref{Thm:classification.finite.type.MFV}).

Towards the description of orbits, in Section \ref{section:4},
we give a complete list of (parameters of) indecomposable objects in the category of representations of quivers which can arise when $\Xfv$ has a finite number of orbits (Theorem \ref{T:indecomposables}).
We derive a parametrization of the orbits in Corollary \ref{C:orbits}.

In Section \ref{section:5}, we illustrate these results by giving a combinatorial parametrization of the orbits of
$L=\GL_p\times \GL_q\times \GL_r$ (with $p+q+r=n$)
on the multiple flag variety
$$
\Flags(1^p)\times \Flags(1^q)\times \Flags(1^r)\times \Grass(2;n)
$$
(this case corresponds to the first line of Table \ref{table:finite.type.m=3}).

\section{The representations of star-shaped quivers}

\label{section:2}

Fix a quadruple of positive integers
$ \bfell=(\ell_1,\ell_2,\ell_3,\ell_4) $.
We consider a star-shaped quiver $ \quiver=\quiver(\bfell) $ with four branches of lengths $\ell_1,\ell_2,\ell_3,\ell_4$,
each branch consisting of $\ell_i-1$ arrows leading from an endpoint to the center.
See the figure below.
\begin{equation}
\label{eq:quiver}
\xymatrix{
\stackrel{A_1}{\bullet} \ar^{\alpha_1}[r] & \stackrel{A_2}{\bullet} \ar[r] & \cdots \ar[r] & \stackrel{A_{\ell_1-1}}{\bullet} \ar^{\alpha_{\ell_1-1}}[rd] & \\
\stackrel{B_1}{\bullet} \ar^{\beta_1}[r] & \stackrel{B_2}{\bullet} \ar[r] & \cdots \ar[r] & \stackrel{B_{\ell_2-1}}{\bullet} \ar^{\beta_{\ell_2-1}}[r] & \stackrel{D_{\ell_4}}{\bullet} & \stackrel{D_{\ell_4-1}}{\bullet} \ar[l] & \cdots \ar[l] & \stackrel{D_2}{\bullet} \ar[l] & \stackrel{D_1}{\bullet} \ar^{\delta_1}[l] \\
\stackrel{C_1}{\bullet} \ar^{\gamma_1}[r] & \stackrel{C_2}{\bullet} \ar[r] & \cdots \ar[r] & \stackrel{C_{\ell_3-1}}{\bullet} \ar_{\gamma_{\ell_3-1}}[ru] & \\
}
\end{equation}
We name the vertices of the quiver by using labels $ A_1, A_2, \dots $, and the arrows are named $ \alpha_1, \alpha_2, \dots $.
Sometimes we call the central vertex $ D_{\ell_4} $ as $ A_{\ell_1} $, and similarly as
$ B_{\ell_2} $ or $ C_{\ell_3} $.  They all denote the same vertex.
We will write $X_i$ for referring to a general vertex of $\quiver$, and $\chi_i$ for a general arrow (i.e., $X$ stands for one of the letters $A$, $B$, $C$, or $D$, and $\chi$ for $\alpha$, $\beta$, $\gamma$, or $\delta$).

A (finite-dimensional) representation of the quiver $\quiver$ is a functor $\pi$ from $\quiver$ to the category of (finite-dimensional) vector spaces;
in other words it is a collection of vector spaces $\pi(X_i)$ labeled by the vertices of $\quiver$ and linear maps $\pi(\chi_i)$ labeled by the arrows
of $\quiver$. A morphism of representations $\phi:\pi\to\pi'$ is a morphism of functors, i.e., a collection of linear maps $\phi(X_i):\pi(X_i)\to\pi'(X_i)$ which commute with the $\pi(\chi_j)$'s and the $\pi'(\chi_k)$'s. If the linear maps $\phi(X_i)$ are bijective, we say that $\phi$ is an isomorphism and in this case it has an inverse morphism $\phi^{-1}:\pi'\to\pi$.

The representations of $\quiver$
form an abelian category which we denote by $\Rep\quiver$.
Furthermore, we consider the additive full subcategory $\Rep'\quiver$ formed by representations $\pi$ such that
\begin{equation}
\label{repQ'}
\begin{array}{ll}
 & \mbox{$\pi(\chi_i)$ is injective for every arrow $\chi_i$ in $\quiver$}
\\[2mm]
\text{and}
&
\pi(D_{\ell_4})=\mathrm{Im}\,\pi(\alpha_{\ell_1-1})\oplus \mathrm{Im}\,\pi(\beta_{\ell_2-1})
\oplus \mathrm{Im}\,\pi(\gamma_{\ell_3-1}).
\end{array}
\end{equation}
Note that $ \Rep'\quiver$ is not an abelian category.
Given a representation $\pi\in\Rep \quiver$,
by a slight abuse of terminology
we call dimension vector of $\pi$
the sequence
\begin{eqnarray*}
\lambda(\pi)
 & = &
\left(
\left(\dim\pi(A_i)-\dim\pi(A_{i-1})\right)_{i=1}^{\ell_1},
\left(\dim\pi(B_i)-\dim\pi(B_{i-1})\right)_{i=1}^{\ell_2},
\right.
\\
 & &
\left.
\left(\dim\pi(C_i)-\dim\pi(C_{i-1})\right)_{i=1}^{\ell_3},
\left(\dim\pi(D_i)-\dim\pi(D_{i-1})\right)_{i=1}^{\ell_4}
\right)
\end{eqnarray*}
with convention $\pi(X_0)=0$ if $X\in\{A,B,C,D\}$.
(Thus the coefficients of $\lambda(\pi)$ are not the actual dimensions of the spaces $\pi(X_i)$,
but we can recover $\dim \pi(A_i)=a_1+\dots+a_i$ from
$\lambda(\pi)=\left((a_i)_{i=1}^{\ell_1}, (b_i)_{i=1}^{\ell_2},(c_i)_{i=1}^{\ell_3},(d_i)_{i=1}^{\ell_4}\right)$ for example.)
The dimension vectors of the objects $\pi\in\Rep'\quiver$ have nonnegative coefficients and they
form a semigroup which we describe now.

We define $ \Lambda=\Lambda(\bfell) $
as the set of nonzero quadruples
$\lambda=(\mbfa,\mbfb,\mbfc,\mbfd)$
of the following type:
\begin{enumerate}[label=($\Lambda$.\arabic*)]
\item \label{Lambda.property:item:1}
$\mbfa = (a_i)_{i=1}^{\ell_1}$, $\mbfb = (b_i)_{i=1}^{\ell_2}$,
$\mbfc = (c_i)_{i=1}^{\ell_3}$,
$\mbfd = (d_i)_{i=1}^{\ell_4}$ are
sequences of nonnegative integers.

\item \label{Lambda.property:item:2}
$|\mbfa|=|\mbfb|=|\mbfc|=|\mbfd|=:|\lambda|$, where $|\mbfa|=\sum_{i=1}^{\ell_1} a_i$
and $|\mbfb|$, $|\mbfc|$, $|\mbfd|$ are defined similarly.

\item \label{Lambda.property:item:3}
Write $\mbfa=(\mbfa',a'')$
with $\mbfa'=(a_1,\ldots,a_{\ell_1-1})$ and $a''=a_{\ell_1}$, and let us use similar notation for $\mbfb$ and $\mbfc$,
then
$|\mbfa'|+|\mbfb'|+|\mbfc'|=|\lambda|$;
equivalently this means that
$a''=|\mbfb'|+|\mbfc'|$, and similarly for $b''$ and $c''$.
\end{enumerate}
Thus $\Lambda$ is a semigroup for pointwise addition of quadruples
($\Lambda\cup\{0\}$ is a submonoid of the group $(\mathbb{Z}^{\ell_1+\ell_2+\ell_3+\ell_4},+)$).
%
%
We say that $\mu\in\Lambda$ is a \emph{summand} of $\lambda$ in $\Lambda$ if 
$\lambda-\mu\in\Lambda\cup\{0\}$, i.e., if we can write
$\lambda=\mu+\nu$ for some $\nu\in \Lambda\cup\{0\}$
(this is equivalent to the condition that
the tuple of integers $\mu$ is pointwise less or equal than $\lambda$, and in particular $\lambda$ is its own summand).
%
We furthermore define a quadratic form
$$
\Lambda=\Lambda(\bfell)\to\K,\quad \lambda=(\mbfa,\mbfb,\mbfc,\mbfd)\mapsto(\lambda|\lambda)
$$
called the \emph{Tits form},
which is given by 
\begin{equation} \label{eq:general.Tits.form}
(\lambda|\lambda) = \frac{1}{2}\Big(\sum_{i = 1}^{\ell_1} a_i^2+
\sum_{j = 1}^{\ell_2} b_j^2+\sum_{k = 1}^{\ell_3} c_k^2+\sum_{m = 1}^{\ell_4} d_m^2-2n^2\Big)
\end{equation}
where $n:=|\lambda|$.

Every $\lambda=(\mbfa,\mbfb,\mbfc,\mbfd)$ in $\Lambda(\bfell)$ determines a product of flag varieties
\begin{eqnarray}\label{eq:def.of.Xabcd}
\Xfv(\lambda) = \Xfv(\mbfa,\mbfb,\mbfc,\mbfd)
 & = & \GL_{|\mbfa'|}/Q_{\mbfa'} \times \GL_{|\mbfb'|}/Q_{\mbfb'} \times \GL_{|\mbfc'|}/Q_{\mbfc'} \times \GL_n/P_{\mbfd} \nonumber \\
 & = & \Flags(\mbfa')\times \Flags(\mbfb')\times
 \Flags(\mbfc')\times \Flags(\mbfd)
\end{eqnarray}
where $n=|\mbfa'| + |\mbfb'| + |\mbfc'|$, which is also equal to
$ |\mbfa|=|\mbfb|=|\mbfc|=|\mbfd| $ by properties
\ref{Lambda.property:item:2} and
\ref{Lambda.property:item:3}
of $ \lambda \in \Lambda(\bfell) $
(here $Q_{\mbfa'}$, $Q_{\mbfb'}$, $Q_{\mbfc'}$, $P_{\mbfd}$ are the standard parabolic subgroups
determined by the compositions $\mbfa',\mbfb',\mbfc',\mbfd$).
%
The variety $\Xfv(\lambda)$
is a slight generalization of the double flag variety $\Xfv$ of (\ref{X:3blocks}), where the Grassmannian variety $\Grass(d;n)$ is replaced (for the moment) by the general flag variety $\GL_n/P_\mbfd=\Flags(\mbfd)$.
As in (\ref{X:3blocks}),
we consider
the orbits of
$ L = \GL_{|\mbfa'|} \times \GL_{|\mbfb'|} \times \GL_{|\mbfc'|} $ on $\Xfv(\lambda)$.

Fix a decomposition
$$
V=V_1\oplus V_2\oplus V_3
$$
where
$ \dim V=n $ and
$ \dim V_1 = |\mbfa'| $, $ \dim V_2 = |\mbfb'| $,
$ \dim V_3 = |\mbfc'| $.
A point in $ \Xfv(\lambda) $
is a quadruple of flags
$ F = ( F^{(1)}, F^{(2)}, F^{(3)}, F^{(4)} ) $
where
$ F^{(j)} = ( F^{(j)}_i )_{i=0}^{\ell_{j-1}} $,
$j=1,2,3$, are flags of $ V_1,V_2,V_3 $ with dimension vectors $\mbfa',\mbfb',\mbfc'$, respectively, and $ F^{(4)} = ( F^{(4)}_i )_{i=0}^{\ell_4} $
is a flag of $V$ with dimension vector $ \mbfd $.
(In other words,
$ \dim F^{(1)}_i / F^{(1)}_{i-1} = a_i $ for all $i$, and similarly for $F^{(2)},F^{(3)},F^{(4)}$.)
In this way, an element of $\Xfv(\lambda)$
can be viewed as an object of $\Rep'\quiver$
of dimension vector $\lambda$,
so
$$
\Xfv(\lambda)\subset\Rep'\quiver.
$$
Note that, for example, the arrow $ \alpha_i $ is represented by the inclusion
$ F_i^{(1)} \hookrightarrow F_{i + 1}^{(1)} $.

\begin{proposition}\label{proposition:orbits.in.xfv.and.RepQ'}
\begin{penumerate}
\item
Every object in $ \Rep'\quiver $ of dimension vector $\lambda$
is isomorphic to an element of $ \Xfv(\lambda)$.
\item
Two elements $ F,F'\in\Xfv(\lambda) $ belong to the same $L$-orbit if and only if they are isomorphic
as representations of $\quiver$.
\end{penumerate}
\end{proposition}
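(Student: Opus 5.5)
Both parts follow directly from the definition of $\Rep'\quiver$: injective arrows can be replaced by inclusions of subspaces, and the direct sum condition at the centre lets us identify the central space with $V=V_1\oplus V_2\oplus V_3$.

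For (1), let $\pi\in\Rep'\quiver$ have dimension vector $\lambda$. Since every $\pi(\chi_i)$ is injective, the composite map $\pi(A_i)\to\pi(A_{\ell_1-1})$ along the branch is injective. Replacing each $\pi(A_i)$ by its image in $\pi(A_{\ell_1-1})$ gives an isomorphic representation. In it, the $A$-branch is a chain of subspaces
$$0=\pi(A_0)\subset\pi(A_1)\subset\cdots\subset\pi(A_{\ell_1-1}),$$
and the arrows are inclusions. We do the same for the $B$-, $C$- and $D$-branches, using the central vertex $\pi(D_{\ell_4})$ for the $D$-branch. Write $W_1=\mathrm{Im}\,\pi(\alpha_{\ell_1-1})$, $W_2=\mathrm{Im}\,\pi(\beta_{\ell_2-1})$ and $W_3=\mathrm{Im}\,\pi(\gamma_{\ell_3-1})$. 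The dimension vector gives $\dim W_1=|\mbfa'|=\dim V_1$, and similarly $\dim W_2=\dim V_2$, $\dim W_3=\dim V_3$. By \eqref{repQ'}, $\pi(D_{\ell_4})=W_1\oplus W_2\oplus W_3$.

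Choose linear isomorphisms $W_j\cong V_j$ and let $\phi:\pi(D_{\ell_4})\to V$ be their direct sum. Define $\phi$ on $\pi(A_{\ell_1-1})$ as the composite $\pi(A_{\ell_1-1})\cong W_1\cong V_1$, and similarly on the other outer vertices. Transport the chains by $\phi$. This gives flags $F^{(1)},F^{(2)},F^{(3)}$ of $V_1,V_2,V_3$, and a flag $F^{(4)}$ of $V$. The dimension vector forces these flags to have types $\mbfa',\mbfb',\mbfc',\mbfd$. So we get a point of $\Xfv(\lambda)$, and $\phi$ is an isomorphism of representations from $\pi$ to it. The only check needed is that $\phi$ commutes with the arrows into the centre, which holds by construction.

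For (2), suppose first that $g=(g_1,g_2,g_3)\in L$ and $F'=gF$. Take $\phi$ to be $g$ on the central vertex and the restriction of $g_j$ (or of $g$ on the $D$-branch) to the subspaces at the other vertices. This is an isomorphism of representations from $F$ to $F'$.

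Conversely, let $\phi:F\to F'$ be an isomorphism of representations, and let $\phi_0\in\GL(V)$ be its component at the central vertex. Commutativity with the arrow $\alpha_{\ell_1-1}$ gives $\phi_0|_{V_1}=\phi(A_{\ell_1-1})$, since in both points the arrow is the inclusion $V_1\hookrightarrow V$. Hence $\phi_0(V_1)=V_1$, and likewise $\phi_0(V_2)=V_2$ and $\phi_0(V_3)=V_3$. Therefore $\phi_0=(g_1,g_2,g_3)\in L$. Commutativity along each branch, whose arrows are inclusions, shows that all components of $\phi$ are restrictions of $\phi_0$. Thus $\phi_0F^{(j)}_i=F'^{(j)}_i$ for all $i,j$, that is, $F'=\phi_0F$.

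None of these steps is a real obstacle. The one point needing care is the convention that in $\Xfv(\lambda)$ the central-vertex maps are the fixed inclusions $V_j\hookrightarrow V$. This is what forces an abstract isomorphism to preserve the decomposition $V_1\oplus V_2\oplus V_3$ and hence lie in $L$, rather than just in $\GL(V)$.
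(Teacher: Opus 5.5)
Your proof is correct and follows essentially the same route as the paper's. In (1) you use injectivity to turn each branch into a chain of subspaces and use the direct-sum condition defining $\Rep'\quiver$ to identify the central space with $V=V_1\oplus V_2\oplus V_3$; in (2) you observe that the central component of an isomorphism must preserve each $V_j$ and hence lies in $L$. Your write-up only makes explicit the commutativity checks that the paper leaves implicit.
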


\begin{proof}
(1) Let $\pi$ be an object of $\Rep'\quiver$ of dimension vector $\lambda$.
By virtue of (\ref{repQ'}), up to isomorphism we can assume that $\pi(D_{\ell_4})=V$, $\mathrm{Im}\,\pi(\alpha_{\ell_1-1})=V_1$,
$\mathrm{Im}\,\pi(\beta_{\ell_2-1})=V_2$,
and $\mathrm{Im}\,\pi(\gamma_{\ell_3-1})=V_3$.
By setting
$$
F_i^{(1)}:=\pi(\alpha_{\ell_1-1})\circ\cdots\circ\pi(\alpha_i)(\pi(A_i))\subset\mathrm{Im}\,\pi(\alpha_{\ell_1-1})=V_1,
$$
we get a flag $F^{(1)}=(F_i^{(1)})_{i=0}^{\ell_1-1}\in\Flags(\mbfa)$.
Proceeding similarly for $\mbfb$, $\mbfc$, $\mbfd$, we obtain an element $(F^{(1)},F^{(2)},F^{(3)},F^{(4)})\in\Xfv(\lambda)$ which is by construction isomorphic as a representation of $\quiver$ to the original representation $\pi$.

(2)
If $F'=g F$ for some $g\in L$, then for every subspace $F_i^{(j)}$ of $F$,
the automorphism $g:V\to V$ restricts to a well-defined isomorphism $g|_{F_i^{(j)}}:F_i^{(j)}\to F_i'^{(j)}$,
where $F_i'^{(j)}$ is the corresponding subspace of $F'$;
and this exactly implies that $F,F'$ are isomorphic as representations of $\quiver$.

Conversely, assume that $F$ and $F'$ are isomorphic as representations.
This means that there is a collection of linear isomorphisms
$\phi_i^{(j)}:F_i^{(j)}\to F_i'^{(j)}$ for all $(i,j)$,
which commute with the inclusions $F_i^{(j)}\subset F_{\ell_4}^{(4)}=V$ and $F_i'^{(j)}\subset F_{\ell_4}'^{(4)}=V$.
In other words, we obtain a linear automorphism $g=\phi_{\ell_4}^{(4)}:F_{\ell_4}^{(4)}=V\to F_{\ell_4}'^{(4)}=V$, i.e., $g\in \GL_n$,
such that $g(F_i^{(j)})=F_i'^{(j)}$ for all $i,j$.
In particular $g(V_j)=g(F_{\ell_j-1}^{(j)})=F_{\ell_j-1}'^{(j)}=V_j$ for all $j\in\{1,2,3\}$.
Thus $g\in L$, and $F'=g F$.
\end{proof}

This proposition tells us that $\Xfv(\lambda)$ will have finitely many $L$-orbits if and only if $\Rep'\quiver$ has finitely many isomorphism classes of objects of dimension vector $\lambda$.

By the Krull--Schmidt theorem, every object in $\Rep\quiver$ can be written as a direct sum of indecomposable objects, and the decomposition is unique up to isomorphisms and permutation of the summands.
Part (1) of the next lemma implies that the decomposition of an object of the subcategory $\Rep'\quiver$ as a sum of indecomposable objects in $\Rep\quiver$ is also a sum of indecomposable objects in $\Rep'\quiver$, so that the Krull--Schmidt theorem is also valid in $\Rep'\quiver$.

\begin{lemma}\label{lemma:quiver.category}
\begin{penumerate}
\item If $\pi$ is an object in $\Rep'\quiver$ and $\pi=\pi' \oplus \pi''$
is a decomposition in $\Rep\quiver$,
then $\pi'$ and $\pi''$ are actually objects in $\Rep'\quiver$.
\item If $\pi_1,\pi_2,\pi$ are objects in $\Rep'\quiver$ such that $\pi_1$ and $\pi_2$ are not isomorphic, then $\pi_1 \oplus \pi$ and $\pi_2 \oplus \pi$ are not isomorphic.
\end{penumerate}
\end{lemma}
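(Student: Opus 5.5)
For part (1), I will check the two defining conditions of $\Rep'\quiver$ in (\ref{repQ'}) directly on each summand. Since the decomposition $\pi=\pi'\oplus\pi''$ is taken in $\Rep\quiver$, each space splits as $\pi(X_i)=\pi'(X_i)\oplus\pi''(X_i)$, and each arrow is block diagonal, $\pi(\chi_i)=\pi'(\chi_i)\oplus\pi''(\chi_i)$. A direct sum of linear maps is injective if and only if each component is injective, so every $\pi'(\chi_i)$ and $\pi''(\chi_i)$ is injective.

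For the direct-sum condition at the center, I will write $U'=\pi'(D_{\ell_4})$ and $U''=\pi''(D_{\ell_4})$. Let $I'_1,I'_2,I'_3\subset U'$ be the images of the last arrows $\alpha_{\ell_1-1},\beta_{\ell_2-1},\gamma_{\ell_3-1}$ under $\pi'$, and let $I''_1,I''_2,I''_3\subset U''$ be the corresponding images under $\pi''$. Then the images under $\pi$ are $I_j=I'_j\oplus I''_j$.

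Independence is immediate: the sum $I'_1+I'_2+I'_3$ sits inside $I_1\oplus I_2\oplus I_3$, which is direct, so $I'_1+I'_2+I'_3$ is direct. The same holds for the $I''_j$.

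For spanning, I will use dimensions. Since $I'_1+I'_2+I'_3\subset U'$, we have $\sum_j\dim I'_j\le\dim U'$, and similarly $\sum_j\dim I''_j\le\dim U''$. Adding these gives $\sum_j\dim I_j\le\dim U$. But equality $\sum_j\dim I_j=\dim U$ holds, because $\pi$ is in $\Rep'\quiver$. Hence both inequalities are equalities, so $U'=I'_1\oplus I'_2\oplus I'_3$ and likewise for $U''$. Alternatively, I could take the projection of $U=\bigoplus_j I_j$ onto $U'$ along $U''$; it maps each $I_j$ onto $I'_j$, which gives spanning directly.

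For part (2), I will use the Krull--Schmidt theorem in $\Rep\quiver$. Suppose $\pi_1\oplus\pi\cong\pi_2\oplus\pi$. Decompose $\pi_1$, $\pi_2$ and $\pi$ into indecomposables in $\Rep\quiver$; finite-dimensionality ensures these decompositions exist. By uniqueness, the multisets of isomorphism classes of indecomposable summands satisfy $M(\pi_1)+M(\pi)=M(\pi_2)+M(\pi)$. Since these are finite multisets, we can cancel $M(\pi)$ to get $M(\pi_1)=M(\pi_2)$, and therefore $\pi_1\cong\pi_2$.

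This argument does not actually need the hypothesis that the objects lie in $\Rep'\quiver$. The only point requiring care is to state precisely that Krull--Schmidt applies because finite-dimensional representations have local endomorphism rings on their indecomposable summands. I do not expect a real obstacle anywhere; the mildest step is the spanning part of (1), and the dimension count settles it.
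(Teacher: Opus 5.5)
Your proposal is correct and follows the paper's route: for (1) you check that injectivity and the direct-sum condition at the center pass to each summand, which is the paper's one-line justification with the dimension count written out, and for (2) you cancel via Krull--Schmidt in $\Rep\quiver$. You are also right that (2) does not actually need (1), since $\Rep'\quiver$ is a full subcategory and so isomorphism there is the same as isomorphism in $\Rep\quiver$; the paper cites (1) only as a convenience.
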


\begin{proof}
(1) follows from the fact that injectivity of arrows and direct sums are preserved by decomposition.

(2) follows from Krull--Schmidt theorem for the category $ \Rep\quiver $ and (1).
\end{proof}

In the next lemma, we again fix a dimension vector $\lambda=(\mbfa,\mbfb,\mbfc,\mbfd)\in\Lambda$ and still consider the action of $L=\GL_{|\mbfa'|}\times \GL_{|\mbfb'|}\times \GL_{|\mbfc'|}$ on the double flag variety $\Xfv(\lambda)$.

\begin{lemma}\label{lemma:criterion.dense.orbit}
\begin{penumerate}
\item We have
\begin{equation}\label{dimension:Xabcd}
\dim \Xfv(\lambda)=\dim L-(\lambda|\lambda).
\end{equation}
\item The following implication holds:
\begin{equation}\label{eq:dense.orbit.implies.form.is.positive}
\text{$\Xfv(\lambda)$ has a dense $L$-orbit}\ \implies \ (\lambda|\lambda)\geq 1.
\end{equation}
In particular, if $(\lambda|\lambda)\leq 0$,
then the double flag variety $\Xfv(\lambda)$ has infinitely many orbits.
\item
If $ \lambda $ has a summand $ \mu \in \Lambda $ which satisfies $ (\mu|\mu)\leq 0$,
then the subcategory $ \Rep' \quiver $ contains infinitely many pairwise non-isomorphic objects of dimension vector $\lambda$, and therefore, $\Xfv(\lambda)$ has infinitely many $L$-orbits.
\end{penumerate}
\end{lemma}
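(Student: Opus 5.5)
Part (1) is a direct dimension count. I would write $n=|\lambda|$, $p=|\mbfa'|$, $q=|\mbfb'|$, $r=|\mbfc'|$, so that $p+q+r=n$. The flag variety $\Flags(\mbfa')$ has dimension $\tfrac12(p^2-\sum_{i<\ell_1}a_i^2)$, and similarly for $\mbfb'$ and $\mbfc'$, while $\dim\Flags(\mbfd)=\tfrac12(n^2-\sum d_m^2)$. By \ref{Lambda.property:item:3} we have $a''=a_{\ell_1}=q+r$, $b''=p+r$ and $c''=p+q$. The identity to check is therefore
\[
\tfrac12\Big(p^2+q^2+r^2+n^2-\sum_{i<\ell_1}a_i^2-\sum_{j<\ell_2}b_j^2-\sum_{k<\ell_3}c_k^2-\sum_m d_m^2\Big)=p^2+q^2+r^2-(\lambda|\lambda).
\]
Substituting the definition (\ref{eq:general.Tits.form}), this reduces to
\[
(q+r)^2+(p+r)^2+(p+q)^2-2n^2=p^2+q^2+r^2-n^2 .
\]
Both sides equal $-2(pq+qr+rp)$, so this is a routine algebraic verification.

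For part (2), I would use that the scalar matrices $\K^\times\subset L$ act trivially on $\Xfv(\lambda)$. Hence every $L$-orbit has dimension at most $\dim L-1$, since the stabilizer contains the one-dimensional group of scalars. If there is a dense orbit, then $\dim\Xfv(\lambda)\le\dim L-1$, and part (1) gives $(\lambda|\lambda)\ge1$. For the second sentence I would argue as follows. If $(\lambda|\lambda)\le0$, there is no dense orbit. Since $\Xfv(\lambda)$ is irreducible, finitely many orbits would force one of them to be dense, so there are infinitely many orbits.

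For part (3), write $\lambda=\mu+\nu$ with $\nu\in\Lambda\cup\{0\}$. By part (2) applied to $\mu$, together with Proposition \ref{proposition:orbits.in.xfv.and.RepQ'}, $\Rep'\quiver$ contains infinitely many pairwise non-isomorphic objects $\pi_1,\pi_2,\dots$ of dimension vector $\mu$. Next I fix one object $\pi$ of $\Rep'\quiver$ of dimension vector $\nu$; it exists because $\Xfv(\nu)$ is nonempty, and if $\nu=0$ I simply omit it. The sums $\pi_i\oplus\pi$ lie in $\Rep'\quiver$, since injectivity of the arrows and the direct-sum condition at the center are preserved under direct sums. They have dimension vector $\lambda$, and by Lemma \ref{lemma:quiver.category}(2) they are pairwise non-isomorphic. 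Applying Proposition \ref{proposition:orbits.in.xfv.and.RepQ'} again shows that they give infinitely many $L$-orbits in $\Xfv(\lambda)$.

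The only point needing care is the dimension bookkeeping in (1); the rest is formal.
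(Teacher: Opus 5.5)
Your proposal is correct and follows essentially the same route as the paper. You use the same dimension count for (1), reduced to the identity $(q+r)^2+(p+r)^2+(p+q)^2-2n^2=p^2+q^2+r^2-n^2$; the same trivial action of the scalars $\K^\times I_n\subset L$ for (2); and the same argument for (3), namely infinitely many non-isomorphic objects of dimension vector $\mu$, each summed with a fixed object of dimension vector $\nu$, then Lemma \ref{lemma:quiver.category}(2). You also spell out two points the paper leaves implicit: that $\Xfv(\lambda)$ is irreducible, so finitely many orbits force a dense one, and that $\Rep'\quiver$ is closed under direct sums.
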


\begin{proof}
(1) Indeed, we have
$$
\dim\GL_{|\mbfa'|}/Q_{\mbfa'}=\frac{1}{2}(|\mbfa'|^2-\sum_{i=1}^{\ell_1-1}a_i^2)
$$
and similarly for $\dim\GL_{|\mbfb'|}/Q_{\mbfb'}$ and $\dim\GL_{|\mbfc'|}/Q_{\mbfc'}$, while
$$
\dim\GL_{n}/P_{\mbfd}=\frac{1}{2}(n^2-\sum_{m=1}^{\ell_4}d_m^2)
$$
where $n:=|\lambda|$.
Hence, from (\ref{eq:general.Tits.form}) and (\ref{eq:def.of.Xabcd}), we obtain
\begin{eqnarray*}
    2\dim\Xfv(\lambda) & = & |\mbfa'|^2+|\mbfb'|^2+|\mbfc'|^2
    -\sum_{i=1}^{\ell_1-1}a_i^2
    -\sum_{j=1}^{\ell_2-1}b_j^2
    -\sum_{k=1}^{\ell_3-1}c_k^2
    +n^2-\sum_{m=1}^{\ell_4}d_m^2
    \\
    & = &
    |\mbfa'|^2+|\mbfb'|^2+|\mbfc'|^2
    -2(\lambda|\lambda)
    -n^2+a_{\ell_1}^2+b_{\ell_2}^2
    +c_{\ell_3}^2
    \\
    & = &
    |\mbfa'|^2+|\mbfb'|^2+|\mbfc'|^2-2(\lambda|\lambda)
    \\
    &&
    -(|\mbfa'|+|\mbfb'|+|\mbfc'|)^2+(|\mbfb'|+|\mbfc'|)^2+(|\mbfa'|+|\mbfc'|)^2
    +(|\mbfa'|+|\mbfb'|)^2
    \\
    & = &
    2|\mbfa'|^2+2|\mbfb'|^2+2|\mbfc'|^2
    -2(\lambda|\lambda).
\end{eqnarray*}
This completes the proof of (1).

(2)\
The claim follows from (\ref{dimension:Xabcd}) and the observation that the one-dimensional subgroup $\K^*I_n\subset L$ acts trivially on $\Xfv(\lambda)$.

(3)\
Suppose that $\lambda=\mu+\nu$ with $\mu\in\Lambda$ such that
$(\mu|\mu)\leq 0$ and $\nu\in\Lambda\cup\{0\}$.
By the claim (2) of the lemma,
combined with Proposition \ref{proposition:orbits.in.xfv.and.RepQ'},
there are infinitely many pairwise non-isomorphic objects $\pi_n$ ($n\in\mathbb{Z}$)
in $\Rep'\quiver$ with the same dimension vector $\mu$.
Choosing a further object $\pi'$ with dimension vector $\nu$ (take $\pi'=0$ if $\nu=0$ and, e.g., any point of the variety $\Xfv(\nu)$ if $\nu\not=0$), we get that $\pi_n\oplus\pi'$ are infinitely many objects in $\Rep'\quiver$ of dimension vector $\lambda$ which are pairwise non isomorphic (by Lemma \ref{lemma:quiver.category}\,(2)).
\end{proof}

Finally, based on a result of Kac \cite{Kac.Quiver.1980} and the Krull--Schmidt theorem, the following characterization holds.

\begin{theorem}\label{theorem:finiteness.criterion}
\begin{penumerate}
\item\label{theorem:finiteness.criterion:item:1}
Given $ \lambda=(\mbfa,\mbfb,\mbfc,\mbfd)\in\Lambda$, the double flag variety $ \Xfv(\lambda) $ has finitely many $L$-orbits if and only if every summand $\mu$ of $\lambda$ in $\Lambda$ satisfies $(\mu|\mu)\geq 1$.

\item\label{theorem:finiteness.criterion:item:2}
Moreover, if $ \lambda $ satisfies the condition in \eqref{theorem:finiteness.criterion:item:1}, there is a one-to-one correspondence
between the $L$-orbits of $\Xfv(\lambda)$
and the decompositions (up to permutation of the terms) $\lambda=\sum_{i=1}^k \mu_i$
with $k\geq 1$, $\mu_i\in\Lambda$, $(\mu_i|\mu_i)=1$.

\item\label{theorem:finiteness.criterion:item:3}
If $ \lambda $ satisfies the condition in \eqref{theorem:finiteness.criterion:item:1}
with the Tits form $(\lambda|\lambda)=1$,
then any representative of the open dense $ L $-orbit in $\Xfv(\lambda)$
is an indecomposable object
of $\Rep'\quiver$ of dimension vector $\lambda$.
\end{penumerate}
\end{theorem}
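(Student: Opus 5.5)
Via Proposition \ref{proposition:orbits.in.xfv.and.RepQ'}, the $L$-orbits of $\Xfv(\lambda)$ are the isomorphism classes of objects of $\Rep'\quiver$ with dimension vector $\lambda$. By Lemma \ref{lemma:quiver.category} and Krull--Schmidt, such classes correspond bijectively to multisets of isomorphism classes of indecomposable objects of $\Rep'\quiver$ whose dimension vectors sum to $\lambda$. The dimension vectors of indecomposables of $\Rep'\quiver$ lie in $\Lambda$ and are summands of $\lambda$. So the plan is to describe these indecomposables by Kac's theorem.

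First translate from the differenced vectors $\lambda(\pi)$ to honest dimension vectors $\underline{\dim}\,\pi$ at the vertices. I would check that for $\pi\in\Rep'\quiver$ the usual Tits form $q(\underline{\dim}\,\pi)=\sum_{v}(\dim\pi_v)^2-\sum_{\chi:v\to w}\dim\pi_v\dim\pi_w$ equals $(\lambda|\lambda)$ in \eqref{eq:general.Tits.form}. Writing $\dim\pi(A_i)=a_1+\dots+a_i$, each branch contributes $\sum a_i^2$ over its non-central part by telescoping. The central vertex, together with the condition $\pi(D_{\ell_4})=\bigoplus \mathrm{Im}$, turns the rest into $-n^2$ plus the squares $a_{\ell_1}^2,b_{\ell_2}^2,c_{\ell_3}^2$; this is essentially the computation already in Lemma \ref{lemma:criterion.dense.orbit}(1). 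Kac's theorem then says the following for the real dimension vector $\underline{\dim}$ of a non-simple-root indecomposable with injective arrows:
\begin{itemize}
\item indecomposables exist exactly for positive roots;
\item for a real root ($q=1$) the indecomposable is unique up to isomorphism;
\item for an imaginary root ($q\le 0$) there are infinitely many indecomposables (a positive-dimensional family), at least over algebraically closed $\K$.
\end{itemize}

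For part (1), the ``if'' direction goes as follows. If every summand $\mu$ has $(\mu|\mu)\ge1$, then each indecomposable summand of an object of dimension $\lambda$ has $q=1$, hence is a real root and unique. There are finitely many possible decompositions of $\lambda$, so there are finitely many orbits. The ``only if'' direction is Lemma \ref{lemma:criterion.dense.orbit}(3). The main obstacle is the correspondence in part (2): for every $\mu\in\Lambda$ with $(\mu|\mu)=1$ that is a summand of $\lambda$, I must show that the unique indecomposable of the corresponding real root actually lies in $\Rep'\quiver$ and has dimension vector $\mu$. I would argue this with the dense-orbit idea instead of reflection functors. By the finiteness hypothesis applied to $\mu$, the variety $\Xfv(\mu)$ has finitely many orbits, hence a dense orbit. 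Its stabilizer has dimension $\dim L-\dim\Xfv(\mu)=(\mu|\mu)=1$ by Lemma \ref{lemma:criterion.dense.orbit}(1), so it is just the scalars $\K^*$. The endomorphism ring of the representative is then a local ring whose units are only scalars, hence indecomposable. This also proves (3), applied with $\mu=\lambda$. Uniqueness among $\Rep'$-objects follows from the real-root uniqueness in Kac's theorem.

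It remains to make the bijection in (2) precise. Each decomposition $\lambda=\sum\mu_i$ with $(\mu_i|\mu_i)=1$ gives the object $\bigoplus\pi_{\mu_i}$. Distinct multisets give non-isomorphic objects by Krull--Schmidt, and every object arises this way by the argument above. One subtle point is checking that no indecomposable of $\Rep'\quiver$ with $(\mu|\mu)=1$ fails to come from $\Lambda$; this is automatic since its dimension vector satisfies \ref{Lambda.property:item:1}--\ref{Lambda.property:item:3}.
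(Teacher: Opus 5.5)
Your proposal is correct and follows essentially the paper's proof. The ``only if'' direction comes from Lemma \ref{lemma:criterion.dense.orbit}(3), the ``if'' direction from Kac's theorem plus Krull--Schmidt (an injection from orbits into decompositions), and both surjectivity in (2) and part (3) from the dense-orbit stabilizer having dimension $(\mu|\mu)=1$. Two small points. Your explicit check that the quiver's Tits form on honest dimension vectors equals \eqref{eq:general.Tits.form} is a welcome detail the paper leaves implicit, though it holds for every representation and does not need the direct-sum condition. Also, dimension $1$ alone does not show the stabilizer is exactly $\K^*$: you need, as the paper does, that it is the unit group of $\Hom(\pi,\pi)$, a nonempty open subset of that space, which gives $\dim\Hom(\pi,\pi)=1$ directly.
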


\begin{proof}
\eqref{theorem:finiteness.criterion:item:1}\
The implication $\Rightarrow$
follows from Lemma \ref{lemma:criterion.dense.orbit}\,(3).
For the reversed implication, according to Proposition \ref{proposition:orbits.in.xfv.and.RepQ'}, we have to show that the category $\Rep'\quiver\subset\Rep\quiver$ contains finitely many isomorphism classes of objects of dimension vector $\lambda$.
We use the following consequences of \cite[Theorem 2]{Kac.Quiver.1980}, which is valid in the larger category $\Rep\quiver$:
\begin{itemize}
    \item[\rm (a)] there is an indecomposable object in $\Rep\quiver$ of dimension vector $\mu$ only if $(\mu|\mu)\leq 1$;
    \item[\rm (b)] if $(\mu|\mu)=1$, then there is a unique isomorphism class of indecomposable objects in $\Rep\quiver$ of dimension vector $\mu$; and we fix a representative $\pi^\mu\in\Rep\quiver$ of this class.
\end{itemize}
The result in \cite[Theorem 3]{Kac.Quiver.1980} also tells us that, if $(\mu|\mu)\leq 0$, then $\Rep\quiver$ contains infinitely many indecomposable objects of dimension vector $\mu$, but we will not need this part of the result (we can use Lemma \ref{lemma:criterion.dense.orbit}\,(2) instead).

For $\lambda\in\Lambda$, we denote by $M(\lambda)\subset\Lambda$ the set of summands $\mu$ of $\lambda$ in $\Lambda$ and by $M^1(\lambda)\subset M(\lambda)$ the subset of summands $\mu$ satisfying $(\mu|\mu)=1$. Finally, let $\Sigma^1(\lambda)$ denote the set of all decompositions $\lambda=\mu_1+\ldots+\mu_k$ (up to permutation of the terms) with $k\geq 1$ and $\mu_i\in M^1(\lambda)$ for all $i$; in other words, the elements of $\Sigma^1(\lambda)$ are multisets $\{\mu_1,\ldots,\mu_k\}\subset M^1(\lambda)$ of sum $\lambda$. The sets $M(\lambda)$, $M^1(\lambda)$, and $\Sigma^1(\lambda)$ are clearly finite sets.

Finally, let $R'(\lambda)$ be the set of isomorphism classes
$[\pi]$ of objects in $\Rep'\quiver$ with dimension vector $\lambda$. We need to show that $R'(\lambda)$ is a finite set.

Let $\pi\in\Rep'\quiver$ be an object of dimension vector $\lambda$. By the Krull--Schmidt theorem, we have a unique decomposition $\pi=\pi_1\oplus\ldots\oplus\pi_k$ with indecomposable objects $\pi_i$ in $\Rep'\quiver$. Let $\mu_i$ be the dimension vector of $\pi_i$,
hence $\lambda=\mu_1+\ldots+\mu_k$. Every $\mu_i$
is a summand of $\lambda$ in $\Lambda$, so it satisfies on one hand $(\mu_i|\mu_i)\geq 1$ (by assumption) and on the other hand $(\mu_i|\mu_i)\leq 1$ (by property (a) above, since $\pi_i$ is indecomposable), thus $(\mu_i|\mu_i)=1$.
This shows that $\mu_i\in M^1(\lambda)$ for all $i$.
Whence a well-defined map
\begin{equation} \label{definition:Phi}
\Phi:R'(\lambda)\to\Sigma^1(\lambda),\quad [\pi]\mapsto \{\mu_1,\ldots,\mu_k\}.
\end{equation}
Moreover, for $\pi=\pi_1\oplus\ldots\oplus\pi_k$ mapped to $\{\mu_1,\ldots,\mu_k\}$ by $\Phi$ as before, Property (b) implies that
$\pi_i\cong \pi^{\mu_i}$ for all $i\in\{1,\ldots,k\}$, so
$[\pi]=[\pi^{\mu_1}\oplus\ldots\oplus \pi^{\mu_k}]$.
This shows that the map $\Phi$ is injective.
Therefore, $R'(\lambda)$ is a finite set and \eqref{theorem:finiteness.criterion:item:1} follows.

\eqref{theorem:finiteness.criterion:item:3}\
Here we assume that $\lambda$ satisfies the condition of (\ref{theorem:finiteness.criterion:item:1}) and $(\lambda|\lambda)=1$. By part (\ref{theorem:finiteness.criterion:item:1}), we know that $\Xfv(\lambda)$ has finitely many $L$-orbits thus there is a dense orbit $L\cdot F$ for some $ F \in \Xfv(\lambda) $.
Let $\pi=F$ be the object in $\Rep'\quiver$ corresponding to any representative of this dense orbit.
Arguing as in \cite[Proposition 3.2]{MWZ.1999},
let us prove that $\pi$ is \emph{Schur indecomposable}
(i.e., $\dim \Hom_{\Rep'\quiver}(\pi, \pi)=1$)
and in particular it is an indecomposable object of $\Rep'\quiver$.
To prove it, we first note that
\begin{equation*}
    \dim \Hom_{\Rep'\quiver}(\pi, \pi) = \dim \GL(V)\cap \Hom_{\Rep'\quiver}(\pi, \pi).
\end{equation*}
By the definition of $\Rep'\quiver$ and the fact that $\pi=F\in\Xfv(\lambda)$,
the stabilizer $ \Stab_{\GL(V)}(\pi)=\GL(V)\cap \Hom_{\Rep'\quiver}(\pi, \pi) $ is contained in $ L $.
Since $L\cdot F$ is dense in $ \Xfv(\lambda) $, we have
\begin{equation*}
    \dim \Stab_{\GL(V)}(\pi)
    = \dim \Stab_{L}(F) = \dim L - \dim \Xfv(\lambda) = (\lambda|\lambda) = 1
\end{equation*}
by \eqref{dimension:Xabcd},
which means that $ \pi $ is Schur indecomposable and $ \dim \pi = \lambda $ indeed.

\eqref{theorem:finiteness.criterion:item:2}
We return to a general $\lambda$ satisfying the condition of (\ref{theorem:finiteness.criterion:item:1}) and we need to show that the injective map $\Phi$ of (\ref{definition:Phi}) is also surjective.
Part (\ref{theorem:finiteness.criterion:item:3}) implies that for every $\mu\in M^1(\lambda)$ there is an indecomposable object in $\Rep'\quiver$ with dimension vector $\mu$; this in fact means that the indecomposable object $\pi^\mu$ (introduced in property (b) above) belongs to $\Rep'\quiver$ whenever $\mu\in M^1(\lambda)$.
Then for every $\{\mu_1,\ldots,\mu_k\}\in\Sigma^1(\lambda)$,
the isomorphism class $[\pi^{\mu_1}\oplus\ldots\oplus\pi^{\mu_k}]$ is a well-defined element of $R'(\lambda)$ which is an antecedent of $\{\mu_1,\ldots,\mu_k\}$ by $\Phi$. This completes the proof.
\end{proof}

\begin{example}
Let $ \lambda=(\mbfa,\mbfb,\mbfc,\mbfd)\in\Lambda(\bfell) $.
Even if there is a decomposition $\lambda=\sum \mu_i$ with $\mu_i\in\Lambda(\bfell)$ and $(\mu_i|\mu_i)=1$ for all $i$,
the double flag variety $ \Xfv(\lambda) $ may not have a dense $L$-orbit.

Indeed,
let $\lambda=((1,1,2),(1,1,2),(4),(1,1,1,1))$.
Then $(\lambda|\lambda)=0$, so that $\Xfv(\lambda)$ has no dense $L$-orbit.
However, $\lambda=\mu+\nu$ where
$$
\mu=((1,0,1),(1,0,1),(2),(1,0,1,0)),\quad \nu=((0,1,1),(0,1,1),(2),(0,1,0,1)),
$$
and we can see that $(\mu|\mu)=(\nu|\nu)=1$.
\end{example}

\section{A classification of double flag varieties of Levi type with finitely many orbits}\label{section:classification.proofs}

In this section, we complete the proof of Theorem \ref{T:main_theorem}. As explained in the introduction, it remains to treat part (C) of this theorem.
We apply the results in the previous section for the quiver $\quiver=\quiver(\bfell)$ of \eqref{eq:quiver} with four branches
to the present case where the rightmost branch of the quiver $ \quiver $ has length two.
Hence we consider quadruples of lengths of the form
$$
\bfell=(\ell_1,\ell_2,\ell_3,2)\quad\mbox{with}\quad\ell_1,\ell_2,\ell_3\geq 1.
$$
Then the semigroup $\Lambda(\bfell)$ consists of nonzero quadruples of dimension vectors of the form
\begin{equation}\label{form-lambda}
\lambda=(\mbfa,\mbfb,\mbfc,\mbfd)=((a_1,\ldots,a_{\ell_1}),(b_1,\ldots,b_{\ell_2}),(c_1,\ldots,c_{\ell_3}),(d_1,d_2))
\end{equation}
satisfying the conditions
\begin{equation}
\label{condition-Lambda}
 \sum_{i=1}^{\ell_1}a_i=\sum_{j=1}^{\ell_2}b_j=\sum_{k=1}^{\ell_3}c_k=d_1+d_2=\sum_{i=1}^{\ell_1-1}a_i+\sum_{j=1}^{\ell_2-1}b_j+\sum_{k=1}^{\ell_3-1}c_k=:|\lambda|.
\end{equation}
In this way we obtain the double flag variety of the form
$$
\Xfv(\lambda)=\Flags(\mbfa')\times\Flags(\mbfb')\times\Flags(\mbfc')\times\Grass(d;n)
$$
where $d:=d_1$ and $n:=d_1+d_2=|\lambda|$ (see \eqref{eq:def.of.Xabcd}).
Here as before we write $\mbfa'=(a_1,\ldots,a_{\ell_1-1})$ and write similarly $\mbfb'$ and $\mbfc'$. In this way $\mbfa',\mbfb',\mbfc'$ are compositions of the numbers $p:=|\mbfa'|$, $q:=|\mbfb'|$, $r:=|\mbfc'|$ respectively,
and $ (p, q, r) $ gives a composition of $n=|\lambda|$.

Note that the coefficients of the tuples in $\lambda$ are allowed to be $0$. If we remove from $\mbfa',\mbfb',\mbfc'$ the coefficients which are equal to $0$, then we get a new dimension vector $\hat\lambda$ which belongs to $\Lambda(\hat\bfell)$ for some $\hat\bfell=(\hat\ell_1,\hat\ell_2,\hat\ell_3,2)$ with $1\leq\hat\ell_i\leq\ell_i$ for $i\in\{1,2,3\}$,
but this new dimension vector $\hat\lambda$ gives rise to the same double flag variety $\Xfv(\hat\lambda)=\Xfv(\lambda)$ (up to immediate isomorphism).

We will use the following definition which actually allows us to compare the original $\lambda$ with other dimension vectors of different lengths. 
Here we call subsequence of $\mbfa$ (of length $m_1$) a sequence of the form $(a_{i_1},\ldots,a_{i_{m_1}})$ where $1\leq i_1<\ldots<i_{m_1}\leq \ell_1$, and use the same notion for $\mbfb$ and $\mbfc$.


\begin{definition} \label{D:partial.order.dim.vectors}
Let
$ \mbfm = (m_1, m_2, m_3, 2) $ with $m_1,m_2,m_3\geq 1$ be another quadruple of lengths and let $\mu=(\mbfe,\mbff,\mbfg,\mbfh)\in\Lambda(\mbfm)$ be a dimension vector in the corresponding semigroup, thus $\mbfh=(h_1,h_2)$.

We write $\mu\leq \lambda$ if
$h_i\leq d_i$ for $i\in\{1,2\}$ and
there are subsequences of $\mbfa$, $\mbfb$, $\mbfc$ which are pointwise bigger than or equal to $\mbfe$, $\mbff$, $\mbfg$.
\end{definition}

\skipover{
Let us denote
$ \Lambda = \coprod_{\bfell} \Lambda(\bfell) $,
the disjoint union of all $ \Lambda(\bfell) \; (\bfell = (\ell_1,\ell_2,\ell_3)) $ satisfying
$ \ell_1, \ell_2, \ell_3 \geq 1 $.

For $\lambda \in \Lambda(\ell_1,\ell_2,\ell_3) $, let $\hat\lambda$ be the subsequence obtained from $\lambda$ by removing all the coefficients among $a_1,\ldots,a_{\ell_1-1}, b_1,\ldots,b_{\ell_2-1},c_1,\ldots,c_{\ell_3-1}$ which are equal to $0$'s.
Thus $\hat\lambda\in\Lambda(\hat{\ell}_1,\hat{\ell}_2,\hat{\ell}_3)$ for some lengths $\hat{\ell}_1,\hat{\ell}_2,\hat{\ell}_3$ such that $\hat{\ell}_i\leq\ell_i$ for all $i\in\{1,2,3\}$.

We define a partial order on the whole $ \Lambda $ in the following way.

\begin{definition}
Let $\lambda\in \Lambda(\ell_1,\ell_2,\ell_3)$ and
$\mu\in \Lambda(m_1,m_2,m_3)$, and we use the notation of (\ref{form-lambda}),
with the letters $a'_i,b'_j,c'_k,d'_1,d'_2$ for denoting the coefficients of $\mu$.
We write $\mu\leq\lambda$ if
$d'_1\leq d_1$, $d'_2\leq d_2$, and
there are subsequences
$(a_{i_1},\ldots,a_{i_{m_1-1}})$,
$(b_{j_1},\ldots,b_{j_{m_2-1}})$,
$(c_{k_1},\ldots,c_{k_{m_3-1}})$
of $(a_1,\ldots,a_{\ell_1-1})$,
$(b_1,\ldots,b_{\ell_2-1})$,
$(c_1,\ldots,c_{\ell_3-1})$,
which are pointwise bigger than or equal to
$(a'_1,\ldots,a'_{m_1-1})$,
$(b'_1,\ldots,b'_{m_2-1})$,
$(c'_1,\ldots,c'_{m_3-1})$, respectively.
\end{definition}}

If $\mu\leq\lambda$, then in particular $m_i\leq\ell_i$ for $i\in\{1,2,3\}$.
Also we have $|\mbfe'|\leq |\mbfa'|$, $|\mbff'|\leq|\mbfb'|$, $|\mbfg'|\leq|\mbfc'|$, hence
$e_{m_1}=|\mbff'|+|\mbfg'|\leq |\mbfb'|+|\mbfc'|=a_{\ell_1}$, and similarly $f_{m_2}\leq b_{\ell_2}$ and $g_{m_3}\leq c_{\ell_3}$.
Clearly, $\leq$ is an order relation on $\bigcup_{m_1,m_2,m_3\geq 1} \Lambda(\mbfm)$.

If $\mbfm=\bfell$ then the relation $\mu\leq\lambda$ means that $\lambda$ is pointwise bigger than or equal to $\mu$ (no need to extract subsequences in this case), which thus just means that $\mu$ is a summand of $\lambda$ in $\Lambda(\bfell)=\Lambda(\mbfm)$.
The following lemma, which is immediate from the definition above, generalizes this observation.

\begin{lemma}\label{L-summand}
\newcommand{\bfemm}{\boldsymbol{m}}
Let $\lambda\in\Lambda(\bfell)$
and $\mu=(\mbfe,\mbff,\mbfg,\mbfh)\in\Lambda(\bfemm)$.
We have $\mu\leq \lambda$ if and only if there exists a summand $\tilde\mu$ of $\lambda$ in $\Lambda(\bfell)$
which is obtained from $\mu$ by inserting some $0$'s in $\mbfe,\mbff,\mbfg$
(so that one has in particular $(\tilde\mu|\tilde\mu)=(\mu|\mu)$).
\end{lemma}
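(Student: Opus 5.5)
The plan is to prove both implications of Lemma \ref{L-summand} by constructing, or reading off, the zero-padded vector $\tilde\mu$. I will use only Definition \ref{D:partial.order.dim.vectors}, the defining conditions \eqref{condition-Lambda} of $\Lambda(\bfell)$, and the remark after \eqref{form-lambda} that a summand is the same as a pointwise-smaller element. Throughout I will treat the first $\ell_i-1$ coordinates (the primed parts) and the last coordinate of each branch separately.

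For ($\Rightarrow$), suppose $\mu\le\lambda$. Choose indices $1\le i_1<\dots<i_{m_1-1}\le\ell_1-1$ with $a_{i_s}\ge e_s$, and similarly for $\mbfb'$ and $\mbfc'$. I will use subsequences of the primed parts here, which is the intended reading since the last coordinates are determined by \eqref{condition-Lambda}. Define $\tilde\mbfe$ of length $\ell_1$ by putting $e_s$ in position $i_s$, zeros in the other positions $\le\ell_1-1$, and $e_{m_1}$ in position $\ell_1$; define $\tilde\mbff$ and $\tilde\mbfg$ the same way, and set $\tilde\mbfh=\mbfh$.

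Two checks remain. First, $\tilde\mu\in\Lambda(\bfell)$: the primed sums and the last coordinates are unchanged, so \eqref{condition-Lambda} carries over from $\mu$. Second, $\tilde\mu\le\lambda$ pointwise. On primed coordinates this holds by the choice of indices, and on $\mbfh$ it holds by hypothesis. On the last coordinates I use the inequality already noted before the lemma: $e_{m_1}=|\mbff'|+|\mbfg'|\le|\mbfb'|+|\mbfc'|=a_{\ell_1}$, and similarly for $\mbff$ and $\mbfg$. Pointwise domination by $\lambda$ means $\lambda-\tilde\mu\in\Lambda\cup\{0\}$: the difference is nonnegative and, since $\Lambda(\bfell)$ is defined by linear conditions, still satisfies them. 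Hence $\tilde\mu$ is a summand of $\lambda$. Finally, $(\tilde\mu|\tilde\mu)=(\mu|\mu)$ because inserting zeros changes neither the sum of squares in \eqref{eq:general.Tits.form} nor $|\mu|$.

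For ($\Leftarrow$), suppose $\tilde\mu$ is such a summand, obtained by inserting zeros into the primed parts. I will argue that zeros may be assumed inserted only among the primed coordinates. This is implicit in ``obtained by inserting $0$'s'' once lengths match and the last entry is kept; if the last entry of $\mbfe$ were itself $0$, a tiny degenerate-case check is needed. Then the positions of $\tilde\mu$ where the original entries of $\mbfe'$ sit form a subsequence of $\mbfa'$ that dominates $\mbfe'$ pointwise, because $\tilde\mu\le\lambda$ pointwise. The same holds for $\mbfb'$ and $\mbfc'$, and $h_i\le d_i$, so $\mu\le\lambda$.

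I expect no genuine obstacle. The only delicate point is the bookkeeping of the last coordinate of each branch: one must check it is dominated, via the inequality above, and that the padding respects the convention that zeros are inserted among the first $\ell_i-1$ entries.
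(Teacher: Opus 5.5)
Your proof is correct. It is exactly the direct unpacking of Definition~\ref{D:partial.order.dim.vectors} that the paper has in mind when it calls the lemma immediate. Placing $e_{m_1}$ in position $\ell_1$ and using $e_{m_1}\le a_{\ell_1}$ is the right construction, and your two side issues both resolve as you expect:

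\begin{itemize}
\item Reading the definition with subsequences of the primed parts is equivalent to the literal one, because the first $m_1-1$ indices of a length-$m_1$ subsequence of $\mbfa$ automatically lie in $\{1,\dots,\ell_1-1\}$.
\item The degenerate case you flag is harmless. If a zero were inserted after $e_{m_1}$, condition~\ref{Lambda.property:item:3} for $\tilde\mu$ would force $|\mbff'|=|\mbfg'|=0$, hence $e_{m_1}=0$.
\end{itemize}
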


Put
\begin{eqnarray*}
 & \lambda^1=((1^2,4),(1^2,4),(1^2,4),(3^2)), \\
 & \lambda^2=((2,6),(1^3,5),(1^3,5),(4^2)), \\
 & \lambda^3=((3,9),(2^2,8),(1^5,7),(6^2)),
\end{eqnarray*}
where (as it is usual) the notation $i^k$ stands for a sequence of $k$ coefficients equal to $i$.
One can see that
\begin{equation}
\label{Tits-form-lambda-i}
(\lambda^1|\lambda^1)=(\lambda^2|\lambda^2)=(\lambda^3|\lambda^3)=0.
\end{equation}

We are now ready to state the main result of this section.

\begin{theorem}\label{Thm:classification.finite.type.MFV}
Let $\lambda\in\Lambda(\bfell)$.
Write
\begin{eqnarray*}
\lambda=(\mbfa,\mbfb,\mbfc,\mbfd) & = & ((a_1,\ldots,a_{\ell_1}),(b_1,\ldots,b_{\ell_2}),(c_1,\ldots,c_{\ell_3}),(d_1,d_2)) \\
 & = & ((\mbfa',a_{\ell_1}),(\mbfb',b_{\ell_2}),(\mbfc',c_{\ell_3}),(d_1,d_2)).
\end{eqnarray*}
The following conditions are equivalent.
\begin{penumerate}
\item $\Xfv(\lambda)$ is of finite type;
\item $\lambda\not\geq\lambda^1$, $\lambda\not\geq\lambda^2$, and $\lambda\not\geq\lambda^3$,
for any permutation of $\mbfa,\mbfb,\mbfc$;
\item $\lambda$ fits in one of the following cases, up to permuting $\mbfa,\mbfb,\mbfc$ and up to removing the coefficients of $\mbfa',\mbfb',\mbfc'$ which are equal to $0$:
\begin{itemize}
\item[\rm (o)] $\ell_1=1$;
\item[\rm (a)] $\min\{d_1,d_2\}\leq 2$;
\item[\rm (b)] $\min\{d_1,d_2\}=3$ and $\ell_1=2$;

\item[\rm (c)] $\min\{d_1,d_2\}\in\{4,5\}$, $\ell_1=2$, and $\ell_2=3$;

\item[\rm (d)] $\ell_1=2$ and one of the following conditions occurs:
\begin{itemize}
\item[\rm (d1)] $a_1=1$;
\item[\rm (d2)] $a_1=2$ and $\ell_2=3$;
\item[\rm (d3)] $\ell_2=2$ or $ (\ell_2=3 \; \text{ and } \, \min\{b_1,b_2\}=1) $;
\item[\rm (d4)] $\ell_2=3$ and $\ell_3\leq 5$.
\end{itemize}
\end{itemize}
%
%
%

\end{penumerate}
\end{theorem}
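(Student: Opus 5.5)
I will prove the cycle of implications (1)$\Rightarrow$(2)$\Rightarrow$(3)$\Rightarrow$(1), using Theorem \ref{theorem:finiteness.criterion} throughout.

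\emph{(1)$\Rightarrow$(2).} Suppose $\lambda\geq\lambda^i$ for some $i$, possibly after permuting $\mbfa,\mbfb,\mbfc$. By Lemma \ref{L-summand}, $\lambda$ has a summand $\tilde\mu$ in $\Lambda(\bfell)$ obtained from $\lambda^i$ by inserting zeros, and $(\tilde\mu|\tilde\mu)=(\lambda^i|\lambda^i)=0$ by \eqref{Tits-form-lambda-i}. Lemma \ref{lemma:criterion.dense.orbit}\,(3) then shows that $\Xfv(\lambda)$ has infinitely many orbits. The only computation is the Tits-form check \eqref{Tits-form-lambda-i}, which is direct. One also checks that each $\lambda^i$ satisfies \eqref{condition-Lambda}; for example, for $\lambda^3$ one has $3+4+5=12=6+6$.

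\emph{(2)$\Rightarrow$(3).} This is pure combinatorics, done by contraposition. Assume $\lambda$, with zeros removed and branches ordered, fits none of the cases (o)--(d); we exhibit some $\lambda^i\leq\lambda$. Failing (o) and (a) gives $\ell_1,\ell_2,\ell_3\geq2$ and $\min d\geq3$. We split according to the branch lengths.
\begin{itemize}
\item If all three branches have $\ell_j\geq3$, each $\mbfa',\mbfb',\mbfc'$ has at least two nonzero parts, so each dominates $(1,1)$. With $d\geq(3,3)$, this gives $\lambda\geq\lambda^1$.
\item If $\ell_1=2$, the failure of (b) forces $\min d\geq4$.
\begin{itemize}
\item If also $\ell_2,\ell_3\geq4$, failure of (d1) gives $a_1\geq2$, and then $\lambda\geq\lambda^2$.
\item Otherwise $\ell_2=3$ (since $\ell_2=2$ would give (d3)). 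Failure of (c) gives $\min d\geq6$. Failure of (d1), (d2) gives $a_1\geq3$; failure of (d3) gives $b_1,b_2\geq2$; failure of (d4) gives $\ell_3\geq6$, so $\mbfc'$ has five nonzero parts. Hence $\lambda\geq\lambda^3$.
\end{itemize}
\end{itemize}
The care needed here is in handling the ordering of branches, i.e.\ the choice of which branch is called $\mbfa$, so that the cases are really exhaustive.

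\emph{(3)$\Rightarrow$(1).} This is the main obstacle. By Theorem \ref{theorem:finiteness.criterion}(1), it suffices to show that every summand $\mu$ of $\lambda$ satisfies $(\mu|\mu)\geq1$. Each case of (3) is stable under passing to summands, after deleting zeros and possibly re-permuting branches, since every condition is a monotone upper bound on parameters. So it suffices to prove $(\lambda|\lambda)\geq1$ for every nonzero $\lambda$ in each case.

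Case (o) is the Bruhat (double flag) situation, where one can also directly use the rewriting $(\lambda|\lambda)=\dim L-\dim\Xfv$. For the remaining cases, I would write
\[
2(\lambda|\lambda)=\sum a_i^2+\sum b_j^2+\sum c_k^2+d_1^2+d_2^2-2n^2,
\]
and use \eqref{condition-Lambda} to eliminate $a_{\ell_1},b_{\ell_2},c_{\ell_3}$ and $d_2$. The goal is to exhibit $2(\lambda|\lambda)-2$ as a sum of nonnegative terms in each case, exploiting that the reduced quiver is then Dynkin or of bounded size.
\begin{itemize}
\item In case (d4) the quiver with $\ell=(2,3,\leq5,2)$ is the Dynkin diagram $E_6$, $E_7$ or $E_8$. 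In (d3) with $\ell_2=2$ it is of type $D$. Positivity of the Tits form there is classical.
\item The other cases involve non-Dynkin graphs. Here I would minimize $(\lambda|\lambda)$ over the remaining free variables, by convexity, for fixed small $d_1$, $a_1$ or $b_1$. For instance, in case (a) with $d_1\leq2$, the contribution $d_1^2+d_2^2-2n^2$ is controlled, and optimizing each branch by equal parts reduces to a finite verification.
\end{itemize}
I expect these case-by-case minimizations, especially (c) and (d2), to be the bulk of the work. If direct inequalities are unwieldy, the fallback is to check finiteness via reflection functors reducing to known Dynkin cases.
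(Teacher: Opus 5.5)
\textbf{Verdict: genuine gap in (3)$\Rightarrow$(1).} Your implications (1)$\Rightarrow$(2) and (2)$\Rightarrow$(3) are correct and agree with the paper's. You organize (2)$\Rightarrow$(3) by branch lengths, the paper by $\min\{d_1,d_2\}$, but the content is the same. Your reduction of (3)$\Rightarrow$(1) to showing $(\lambda|\lambda)\geq 1$ in each case is also the paper's. The problem is how you then prove that inequality.

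\textbf{The Dynkin shortcut is false as stated.} $\quiver(2,3,\ell_3,2)$ is a star with four arms. Its Tits form vanishes on the $\widetilde D_4$ null root (center $2$, its four neighbours $1$), so no classical positivity result applies. That vector violates ($\Lambda$.3), which is the whole point.

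\textbf{Case (o) is not the Bruhat situation.} $\ell_1=1$ gives $L=\GL_q\times\GL_r$ acting on $\Flags(\mbfb')\times\Flags(\mbfc')\times\Grass(d;n)$, which is the AIII case. Moreover, the identity $(\lambda|\lambda)=\dim L-\dim\Xfv(\lambda)$ yields $(\lambda|\lambda)\geq 1$ only once you already know there is a dense orbit.

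\textbf{The fix is a computation you did not make.} In vertex dimensions, the center of $\quiver(\bfell)$ contributes $N^2-N(p+q+r+d_1)=-d_1(p+q+r)$ once ($\Lambda$.3) forces $N=p+q+r$. So on $\Lambda(\bfell)$, $(\lambda|\lambda)$ is exactly the Tits form of the three-armed star $T_{\ell_1,\ell_2,\ell_3}$. This star is obtained by deleting the center and joining $D_1$ to the ends of the $A$, $B$, $C$ arms. The form is evaluated at the partial sums of $\mbfa',\mbfb',\mbfc'$ and at $d_1$; twice it is \eqref{3.4}. With this lemma, cases (o), (d3) with $\ell_2=2$, and (d4) follow from positive definiteness in types $A$, $D$, $E_{6,7,8}$. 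That is cleaner than the paper's Lagrange estimate in (d4). It also identifies $\lambda^1,\lambda^2,\lambda^3$ as the null roots of $\widetilde E_6,\widetilde E_7,\widetilde E_8$.

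\textbf{The non-Dynkin cases are only announced.} Cases (a), (b), (c), (d1), (d2), and (d3) with $\ell_2=3$ are non-Dynkin. Positivity there genuinely depends on the bound on $d_1$, $a_1$ or $b_1$, and you only say you would minimize. This is the entire technical content of the paper's proof. Starting from \eqref{3.4}, it uses:
\begin{itemize}
\item $\sum_{i<\ell_1}a_i^2\geq p$;
\item $p^2+q^2+r^2\geq n^2/3$;
\item monotonicity of $x\mapsto 2x^2-2nx$ on $[0,n/2]$;
\item Lagrange multipliers on $p+q+r=n$.
\end{itemize}
For example, it gets $2(\lambda|\lambda)\geq (4n^2-52n+179)/10$ in (b), $(24n^2-496n+2593)/52$ in (c) for $n\geq 9$, and a sum of squares plus $1$ in (d2). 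Small $n$ (namely $n\leq 5$, and $n=8$ in (c)) is checked by hand.

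These real relaxations give only $(\lambda|\lambda)>0$. To conclude $(\lambda|\lambda)\geq 1$ you also need $(\lambda|\lambda)\in\mathbb{Z}$, which holds because each $p^2+\sum_{i<\ell_1}a_i^2$ is even. So aiming to write $2(\lambda|\lambda)-2$ as a sum of nonnegative terms is more than you need. Until these estimates are actually carried out, (3)$\Rightarrow$(1) is not proved.
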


\begin{remark}
Table \ref{table:finite.type.m=3} can be derived from this result.
Case (o) of the theorem, where $\ell_1=1$, corresponds to the situation where $|\mbfa'|=0$, i.e., the Levi subgroup $L=\GL_{|\mbfa'|}\times \GL_{|\mbfb'|}\times \GL_{|\mbfc'|}$ has actually two blocks. This case does not appear in Table \ref{table:finite.type.m=3} (which concerns Levi subgroups with three blocks) but corresponds to the first case referenced in Table \ref{table:finite.type.m=2}.
\end{remark}

\begin{proof}[Proof  of Theorem \ref{Thm:classification.finite.type.MFV}]
(1)$\Rightarrow$(2): If $\lambda\geq\lambda^i$ for some $i\in\{1,2,3\}$, then by Lemma \ref{L-summand}
there is a summand $\mu$ of $\lambda$ in $\Lambda(\bfell)$ such that $(\mu|\mu)=(\lambda^i|\lambda^i)=0$,
and Theorem \ref{theorem:finiteness.criterion} implies that $\Xfv(\lambda)$ is not of finite type.

\bigskip

(2)$\Rightarrow$(3):
Let $\hat\lambda$ be the dimension vector obtained from $\lambda$
by removing all $0$'s from $\mbfa',\mbfb',\mbfc'$. Then a fortiori $\hat\lambda$ satisfies condition (2). Since it suffices to show that $\hat\lambda$ satisfies (3), we can actually assume that $\lambda=\hat\lambda$, i.e., that all coefficients of $\mbfa',\mbfb',\mbfc'$ are positive.

We set $p=|\mbfa'|$, $q=|\mbfb'|$, $r=|\mbfc'|$, and $n=|\lambda'|=p+q+r$, so that $a_{\ell_1}=q+r$,
$b_{\ell_2}=p+r$, and $c_{\ell_3}=p+q$.
Also, up to permuting $\mbfa,\mbfb,\mbfc$, we can assume that
$1\leq \ell_1\leq \ell_2\leq \ell_3$.
If $\ell_1=1$ or $\min\{d_1,d_2\}\leq 2$, there we are in case (o) or (a). In the following we assume $\ell_1\geq 2$ and $\min\{d_1,d_2\}\geq 3$,
and show that $\lambda$ fits in one of the cases (b)--(d) of part (3) of the theorem.

\medskip\noindent
{\it Case 1:} $\min\{d_1,d_2\}=3$.

We then have $\mbfd=(d_1,d_2)\geq(3,3)$.
If $\ell_1\geq 3$, then $\mbfa',\mbfb',\mbfc'$ all contain at least two (positive) coefficients. This yields $p,q,r\geq 2$ which implies that $a_{\ell_1}=q+r\geq 4$ and similarly $b_{\ell_2}\geq 4$ and $c_{\ell_3}\geq 4$. But then we get $\lambda\geq((1,1,4),(1,1,4),(1,1,4),(3,3))=\lambda^1$ which contradicts the assumption made in part (2) of the theorem. Hence $\ell_1=2$, so that we are in case (b).

\medskip\noindent
{\it Case 2:} $\min\{d_1,d_2\}\in\{4,5\}$.

Then $\mbfd\geq (4,4)\geq (3,3)$. As in Case 1, we must have $\ell_1=2$, hence $\mbfa=(p,n-p)$.
If $p\geq 2$ and $\ell_2\geq 4$, then $q,r\geq 3$, which
yields $n-p=q+r\geq 6$ thus $\mbfa\geq (2,6)$, while $b_{\ell_2}=p+r\geq 5$, so $\mbfb\geq (1^3,5)$
and similarly for $\mbfc$; altogether we obtain $\lambda\geq ((2,6),(1^3,5),(1^3,5),(4^2))=\lambda^2$, a contradiction with the assumption made in (2).
Consequently, we must have $p=1$ or $\ell_2\leq 3$, so we are in case (d1), (c), or (d3) of the theorem.

\medskip
\noindent
{\it Case 3:} $\min\{d_1,d_2\}\geq 6$.

Then $\mbfd\geq (6,6)$. As in Case 2, we must have $\ell_1=2$, and $p=a_1=1$ or $\ell_2\leq 3$.
We will assume $p\geq 3$ and $\ell_2=3$, otherwise we are directly in case (d1), (d2) or (d3) of the theorem and we are done. Thus $\mbfa=(p,n-p)$ with $p\geq 3$, $\mbfb=(b_1,b_2,n-q)$,
and $\mbfc=(c_1,\ldots,c_{\ell_3-1},n-r)$.

If $\min\{b_1,b_2\}\geq 2$ and $\ell_3\geq 6$,
then $q=b_1+b_2\geq 4$ and $r\geq 5$;
thus $n-p=q+r\geq 9$, $n-q=p+r\geq 8$, and $n-r=p+q\geq 7$,
whence $\lambda\geq ((3,9),(2^2,8),(1^5,7),(6^2))=\lambda^3$, in contradiction with (2).
Therefore, $\min\{b_1,b_2\}=1$ or $\ell_3\leq 5$, and we are in case (d3) or (d4) of the theorem.
The proof of (2)$\Rightarrow$(3) is complete.

\bigskip

(3)$\Rightarrow$(1): It is easy to see that if $\lambda$ fits in one of the cases (o), (a)--(d) of the theorem, then so does every summand $\mu$ of $\lambda$. Therefore, by virtue of Theorem \ref{theorem:finiteness.criterion}, for showing the desired implication, it is sufficient to check that
$$
(\lambda|\lambda)\geq 1\quad\mbox{whenever $\lambda$ fits in one of the cases (o), (a)--(d)}.
$$
In the calculations below, in each case, we also determine the $\lambda$'s for which the equality $(\lambda|\lambda)=1$ holds.
In the next section, thanks to Theorem \ref{theorem:finiteness.criterion}, this will lead to a parametrization of the orbits of $\Xfv(\lambda)$ when this double flag variety is of finite type.

Up to removing the zero coefficients, we can assume that all the coefficients of $\mbfa',\mbfb',\mbfc'$ are nonzero.
Set
$$
n=|\lambda|,\quad p=\sum_{i=1}^{\ell_1-1}a_i,\quad q=\sum_{j=1}^{\ell_2-1}b_j, \quad r=\sum_{k=1}^{\ell_3-1}c_k,\quad \mbfd=(d,n-d).
$$
Thus $n=p+q+r$.
Also there is no loss of generality in assuming that
$\ell_1\leq \ell_2\leq \ell_3$ and that
every sequence $\mbfa',\mbfb',\mbfc',\mbfd$ is nondecreasing,
in particular $d\leq n-d$ and
thus $d\leq\frac{n}{2}$.
We have
\begin{eqnarray}
2(\lambda|\lambda)
&=& \sum_{i=1}^{\ell_1-1}a_i^2+(n-p)^2+
\sum_{j=1}^{\ell_2-1}b_j^2+(n-q)^2+\sum_{k=1}^{\ell_3-1}c_k^2
+(n-r)^2+d^2+(n-d)^2-2n^2 \nonumber \\
&=& p^2+q^2+r^2+\sum_{i=1}^{\ell_1-1}a_i^2+
\sum_{j=1}^{\ell_2-1}b_j^2+\sum_{k=1}^{\ell_3-1}c_k^2+2d^2-2nd.
\label{3.4}
\end{eqnarray}
We note that
\begin{equation}
\sum_{i=1}^{\ell_1-1}a_i^2\geq \sum_{i=1}^{\ell_1-1}a_i=p \quad
\mbox{with equality if and only if $a_1=\ldots=a_{\ell_1-1}=1$;}
\label{3.5}
\end{equation}
in the same way $\sum_{j=1}^{\ell_2-1}b_j^2\geq q$ and $\sum_{k=1}^{\ell_3-1}c_k^2\geq r$,
with similar case of equality. Moreover,
\begin{eqnarray}
& p^2+q^2+r^2\geq\dfrac{n^2}{3}\quad \mbox{with equality if and only if $p=q=r=\dfrac{n}{3}$;}
\label{3.6}
\\
& \mbox{the map $x\mapsto 2x^2-2nx$ is decreasing on $[0,\frac{n}{2}]$,
hence $2d^2-2nd\geq -\dfrac{n^2}{2}$.}
\label{3.7}
\end{eqnarray}
These observations lead to the estimate
$$
2(\lambda|\lambda)\geq n-\frac{n^2}{6},\quad \mbox{hence $(\lambda|\lambda)>0$ whenever $n\leq 5$.}
$$
Moreover, it is straightforward to see that, for $n\leq 5$, the equality $(\lambda|\lambda)=1$ holds if and only if $\lambda$ is 
one of the tuples
\begin{eqnarray*}
 & ((1),(1),(1,0),(0,1)),\quad
((2),(1,1),(1,1),(1,1)),\quad
((1,2),(1,2),(1,2),(1,2)),\\
&
((1,3),(1,3),(1,1,2),(2,2)),\quad
((1,4),(1,1,3),(1,1,3),(2,3)).
\end{eqnarray*}
We now assume that $n\geq 6$ and consider separately the cases (o), (a)--(d).

\bigskip
\noindent
(o) $\ell_1=1$, i.e., $\mbfa=(n)$ and $p=0$:

Combining (\ref{3.4}), (\ref{3.5}) (applied to $\mbfb$ and $\mbfc$), and (\ref{3.7}) leads to
$$
2(\lambda|\lambda)\geq q^2+r^2+q+r-\frac{n^2}{2}\geq n\geq 6
$$
since $q+r=n$ and thus $q^2+r^2\geq \frac{n^2}{2}$.
Hence $(\lambda|\lambda)>1$; in particular $(\lambda|\lambda)>0$.

\bigskip
\noindent
(a) $d\leq 2$:

Using (\ref{3.4})--(\ref{3.7}), we find that
$$
2(\lambda|\lambda)\geq \frac{n^2}{3}+n+(8-4n)=\frac{n^2}{3}-3n+8\geq \frac{6^2}{3}-3\cdot 6+8=2,
$$
hence $(\lambda|\lambda)>0$.

Moreover, the equality $(\lambda|\lambda)=1$ holds if and only if
$n=6$, $d=2$, $p=q=r=\frac{n}{3}=2$, and all the coefficients of $\mbfa',\mbfb',\mbfc'$ are equal to $1$,
i.e., $\lambda=((1,1,4),(1,1,4),(1,1,4),(2,4))$.

\bigskip
\noindent
(b) $d=3$ and $\ell_1=2$; thus $\mbfa=(p,n-p)$:

Using (\ref{3.4}), (\ref{3.5}), and (\ref{3.7}), we see that
$$
2(\lambda|\lambda)\geq 2p^2+q^2+r^2+q+r+18-6n.
$$
Using Lagrange multipliers rule, the minimum of the map $(x,y,z)\mapsto 2x^2+y^2+z^2+y+z$
on the set $\{(x,y,z)\in\mathbb{R}^3:x+y+z=n\}$
(which exists since we are minimizing a coercive quadratic function on an affine space)
is attained at a point $(x,y,z)$ such that
$$
\left\{\begin{array}{ll}
4x & = \alpha \\ 2y+1 & =\alpha \\
2z+1 & =\alpha
\end{array}\right.
$$
for some $\alpha\in\mathbb{R}$.
Then $n=x+y+z=\frac{\alpha}{4}+\frac{\alpha-1}{2}+\frac{\alpha-1}{2}=\frac{5\alpha-4}{4}$, so $\alpha=\frac{4n+4}{5}$.
Whence $(x,y,z)=(\frac{n+1}{5},\frac{4n-1}{10},\frac{4n-1}{10})$. This yields
$$
2(\lambda|\lambda) \geq 2 \, \Bigl(\frac{n+1}{5}\Bigr)^2+2\, \Bigl(\frac{4n-1}{10}\Bigr)^2
+2\, \frac{4n-1}{10} + 18-6n=\frac{4n^2-52n+179}{10}>0,
$$
hence $(\lambda|\lambda)>0$.

Moreover, the same estimate tells us more precisely that
$(\lambda|\lambda)>1$ whenever $n\geq 9$. Also one can see that, for $n\in\{6,7,8\}$,
the only tuples $\lambda$ for which the equality $(\lambda|\lambda)=1$ holds
are
$$
\begin{array}{lll}
((1,5),(1^2,4),(1^3,3),(3,3)), & ((2,4),(1^2,4),(1^2,4),(3,3)), &
((1,6),(1^3,4),(1^3,4),(3,4)), \\
((2,5),(1^2,5),(1^3,4),(3,4)), &
((2,6),(1^3,5),(1^3,5),(3,5)).
\end{array}
$$

\medskip

In the remaining cases, we have $\ell_1=2$, hence we can assume that $d\geq 4$ and thus $n\geq 8$ (otherwise we are in case (a) or (b)).

\bigskip
\noindent
(c) $d\in\{4,5\}$, $\ell_1=2$, and $\ell_2=3$;
thus $\mbfa=(p,n-p)$ and $\mbfb=(b,q-b,n-q)$:

We have $b\leq q-b$, hence $b\leq\frac{q}{2}$.
Note that $b^2+(q-b)^2\geq \lfloor\frac{q}{2}\rfloor^2+ \lceil\frac{q}{2}\rceil^2$ with equality if and only if $b=\lfloor\frac{q}{2}\rfloor$.
Also if $n=8$ then $d=4$ and $2d^2-2nd=-32$;
if $n=9$ then $d=4$ and $2d^2-2nd=-40=50-10n$;
by (\ref{3.7}), if $n\geq 10$ then $2d^2-2nd\geq 50-10n$ with equality if and only if $d=5$.

If $n=8$, then (\ref{3.4})--(\ref{3.5}) yield
$2(\lambda|\lambda)\geq 2p^2+2q^2-2\lfloor\frac{q}{2}\rfloor \lceil\frac{q}{2}\rceil+r^2+r-32$ with equality if and only if $b=\lfloor\frac{q}{2}\rfloor$
and all the coefficients in $\mbfc'$ are $1$.
By considering the various triples $(p,q,r)$ with $p+q+r=8$, it turns out that we always have $(\lambda|\lambda)\geq 1$ with equality if and only if
$\lambda=((2,6),(1,1,6),(1^4,4),(4,4))$ or
$\lambda=((2,6),(1,2,5),(1^3,5),(4,4))$. 

For $n\geq 9$, from (\ref{3.4})--(\ref{3.5}), we write
$$
2(\lambda|\lambda)
\geq 2p^2+q^2+\Big\lfloor \frac{q}{2} \Big\rfloor^2+\Big\lceil \frac{q}{2} \Big\rceil^2+r^2+r+50-10n
\geq 2p^2+\frac{3}{2}q^2+r^2+r+50-10n.
$$
Here again we use Lagrange multipliers rule to determine the minimum of the map $(x,y,z)\mapsto 2x^2+\frac{3}{2}y^2+z^2+z$
on the set $\{(x,y,z)\in\mathbb{R}^3:x+y+z=n\}$. It
is attained at a point $(x,y,z)$ such that
$$
\left\{\begin{array}{ll}
4x & = \alpha \\ 3y & =\alpha \\
2z+1 & =\alpha
\end{array}\right.
$$
for some $\alpha\in\mathbb{R}$.
Then $n=x+y+z=\frac{\alpha}{4}+\frac{\alpha}{3}+\frac{\alpha-1}{2}=\frac{13\alpha-6}{12}$, so $\alpha=\frac{12n+6}{13}$.
Whence
\begin{eqnarray*}
2(\lambda|\lambda) & \geq &   2\Big(\frac{6n+3}{26}\Big)^2+\frac{3}{2}\Big(\frac{4n+2}{13}\Big)^2+\Big(\frac{12n-7}{26}\Big)^2+\frac{12n-7}{26}+50-10n \\
 & = & \frac{24n^2-496n+2593}{52}>0,
\end{eqnarray*}
so $(\lambda|\lambda)>0$.

From this estimate, we get in addition that $(\lambda|\lambda)>1$ whenever $n\geq 13$.
If $9\leq n\leq 12$, then it can be checked that the only tuples $\lambda$ such that $(\lambda|\lambda)=1$ are
$$
\begin{array}{lll}
((2,7),(1,2,6),(1^4,5),(4,5)), & ((2,8),(1,2,7),(1^5,5),(5,5)), & ((2,8),(2,2,6),(1^4,6),(5,5)),\\
((3,7),(1,2,7),(1^4,6),(5,5)), & ((2,9),(2,2,7),(1^5,6),(5,6)), & ((3,8),(1,2,8),(1^5,6),(5,6)),\\
((3,8),(2,2,7),(1^4,7),(5,6)), & ((3,9),(2,2,8),(1^5,7),(5,7)).
\end{array}
$$

\bigskip
\noindent
(d1) $\ell_1=2$ and $a_1=1$, thus $\mbfa=(1,n-1)$:

Here we have $p=1$ and so $q+r=n-1$. From \eqref{3.4} and \eqref{3.5} we obtain that
$$
2(\lambda|\lambda)\geq 1+q^2+r^2+1+q+r+(2d^2-2nd)=(n+1)+(q^2+r^2)+(2d^2-2nd)
$$
with equality if and only if all the coefficients in $\mbfb'$ and $\mbfc'$ are $1$.

If $n=2k+1$ is odd, we have
$$
2(\lambda|\lambda)\geq (n+1)+(2k^2)+(2k^2-2nk)=2
$$
with equality if and only if, moreover, $q=r=d=k$.

If $n=2k$ is even, then we get
$$
2(\lambda|\lambda)\geq (n+1)+((k-1)^2+k^2)+(2k^2-2nk)=2
$$
with equality if and only if, in addition to the fact that all coefficients in $\mbfb',\mbfc'$ are $1$, we have $d=k$ and $(q,r)=(k-1,k)$ (recall that $\ell_2\leq \ell_3$ by assumption).

In all the cases we get $(\lambda|\lambda)>0$. Moreover, the equality $(\lambda|\lambda)=1$ holds if and only if
$\lambda=((1,2k),(1^k,k+1),(1^k,k+1),(k,k+1))$ or
$\lambda=((1,2k-1),(1^{k-1},k+1),(1^k,k),(k,k))$ for some $k$.



\bigskip
\noindent
(d2) $\ell_1=2$, $a_1=2$, and $\ell_2=3$; thus $\mbfa=(2,n-2)$ and $\mbfb=(b,q-b,n-q)$:

We can assume that $d\geq 6$ and so $n\geq 12$
(otherwise, we are in case (c)).
By (\ref{3.4})--(\ref{3.5}) and (\ref{3.7}), we have
$$
2(\lambda|\lambda) \geq 8+\frac{3}{2}q^2+r^2+r-\frac{n^2}{2}
$$
with equality if and only if $b=\frac{q}{2}$, all coefficients in $\mbfc'$ are $1$, and $d=\frac{n}{2}$.
Using that $q=n-r-2$, we infer that
$$
2(\lambda|\lambda)\geq\frac{5}{2}r^2+7r-6n-3nr+n^2+14
= \Bigr( n-\frac{3r+6}{2} \Bigl)^2+\frac{(r-4)^2}{4}+1 > 0.
$$
This shows that $(\lambda|\lambda)\geq 1$ for all $\lambda$; moreover, the equality holds if and only if $r=6$, $n=12$, and in fact $\lambda=((2,10),(2,2,8),(1^6,6),(6,6))$.

\bigskip
\noindent
(d3)\  $\ell_1=2$ and ($\ell_2=2$ or $\ell_2=3$ with $\min\{b_1,b_2\}=1$),
i.e., $\mbfa=(p,n-p)$ and $\mbfb$ is $(q,n-q)$ or $(1,q-1,n-q)$:

We can deal with all situations simultaneously by supposing that $\mbfb=(b,q-b,n-q)$ with $b\in\{0,1\}$.
Also we can assume that $p,q,r\geq 2$ (otherwise, we are in case (d1)).
By (\ref{3.4}), (\ref{3.5}), and (\ref{3.7}), we have
\begin{equation}
\label{d3:estimate1}
2(\lambda|\lambda) \geq 2p^2+2q^2-2q+r^2+r+2-2\Big\lfloor\frac{n}{2}\Big\rfloor \Big\lceil\frac{n}{2}\Big\rceil
\end{equation}
with equality if and only if
$b=1$, all the coefficients in $\mbfc'$ are $1$, and $d=\lfloor\frac{n}{2}\rfloor$.

In the present case (d3), instead of relying on Lagrange multipliers rule like in previous cases, we prove the inequality $(\lambda|\lambda)>0$, in fact $(\lambda|\lambda)\geq 1$, through a different, more precise analysis, which will also allow us to characterize the cases of equality
(as we shall see, there are infinitely many tuples $\lambda$ with $(\lambda|\lambda)=1$).

%

Note that $p+q+r=n$ hence $p=n-r-q$. We aim to minimize the quantity $2p^2+2q^2-2q$ while $n$ and $r$ are fixed. To this end, we define
\begin{eqnarray*}
f(q) & = & p^2+q^2-q \\
 & = & 2q^2-(2(n-r)+1)q+(n-r)^2.
\end{eqnarray*}
The function $f(x)=2x^2-(2(n-r)+1)x+(n-r)^2$ attains its minimum at $x=\frac{n-r}{2}+\frac{1}{4}$. Hence, since $q$ must be an integer, the quantity $f(q)$ attains its minimum at $q=\frac{n-r}{2}$ if $n-r$ is even, or at $q=\frac{n-r+1}{2}$ if $n-r$ is odd, and in both cases the minimal value is
$$
f\Big(\frac{n-r}{2}\Big)=f\Big(\frac{n-r+1}{2}\Big)=\frac{(n-r)^2}{2}-\frac{n-r}{2}.
$$
Whence
\begin{equation}\label{minoration1}
2(\lambda|\lambda)\geq (n-r)^2-(n-r)+r^2+r+2-2\Bigl\lfloor\frac{n}{2}\Bigr\rfloor \Bigl\lceil\frac{n}{2}\Bigr\rceil
\end{equation}
with equality if and only if we have
\begin{equation}
\label{condition3}
q=p=\frac{n-r}{2}\ \mbox{if $n-r$ is even},\quad
q=\frac{n-r+1}{2}=p+1\ \mbox{if $n-r$ is odd},
\end{equation}
in addition to the conditions written after (\ref{d3:estimate1}).

The right-hand side of (\ref{minoration1}) can be rewritten as
$$
g(r):=2r^2-2(n-1)r+n^2-n+2-2\Bigl\lfloor\frac{n}{2}\Bigr\rfloor \Bigl\lceil\frac{n}{2}\Bigr\rceil.
$$
From here, we distinguish two cases.

\medskip
\noindent
{\it Case 1:} $n=2k+1$ is odd.

In this case, the minimum of $g(r)$ (while $r$ runs over integers) is attained at $r=\frac{n-1}{2}=k$ and the value of this minimum is
$$
g\Big(\frac{n-1}{2}\Big)=\frac{(n-1)^2}{2}-(n-1)^2+n^2-n+2-\frac{(n-1)(n+1)}{2}=2.
$$
In this case, we finally obtain that
$(\lambda|\lambda)\geq 1$ with equality if and only if we have $r=\frac{n-1}{2}$ in addition to (\ref{condition3}) and the conditions of equality in (\ref{d3:estimate1}).
Note that we then have $n-r=k+1$, hence $n-r$ is even when $k$ is odd and it is odd when $k$ is even.
Hence the equality $(\lambda|\lambda)=1$ holds if and only if
$$
\lambda = \left\{
\begin{array}{ll}
\displaystyle
\Big(\Big(\frac{k+1}{2},\frac{3k+1}{2}\Big),\Big(1,\frac{k-1}{2},\frac{3k+1}{2}\Big),(1^k,k+1),(k,k+1)\Big) & \mbox{if $k$ is odd}, \\[2mm]
\displaystyle
\Big(\Big(\frac{k}{2},\frac{3k}{2}+1\Big),\Big(1,\frac{k}{2},\frac{3k}{2}\Big),(1^k,k+1),(k,k+1)\Big) & \mbox{if $k$ is even}.
\end{array}
\right.
$$

\medskip
\noindent
{\it Case 2:} $n=2k$ is even.

In this case, the minimum of $g(r)$ (while $r$ runs over integers) is attained at $r=\frac{n}{2}=k$ and at $r=\frac{n}{2}-1=k-1$. The value of this minimum is
$$
g\Big(\frac{n}{2}\Big)=g\Big(\frac{n}{2}-1\Big)=\frac{n^2}{2}-n(n-1)+n^2-n+2-\frac{n^2}{2}=2.
$$
Here we therefore obtain that
$(\lambda|\lambda)\geq 1$ with equality if and only if we have $r\in\{\frac{n}{2},\frac{n}{2}-1\}$ in addition to (\ref{condition3}) and the conditions for equality in (\ref{d3:estimate1}).
We conclude that the equality $(\lambda|\lambda)=1$ holds if and only if
\begin{eqnarray*}
& \displaystyle \lambda=\Big(\Big(\frac{k-1}{2},\frac{3k+1}{2}\Big),\Big(1,\frac{k-1}{2},\frac{3k-1}{2}\Big),(1^k,k),(k,k)\Big) \\[2mm]
\mbox{or} & \displaystyle \lambda=\Big(\Big(\frac{k+1}{2},\frac{3k-1}{2}\Big),\Big(1,\frac{k-1}{2},\frac{3k-1}{2}\Big),(1^{k-1},k+1),(k,k)\Big)
\end{eqnarray*}
when $k$ is odd, and
\begin{eqnarray*}
& \displaystyle \lambda=\Big(\Big(\frac{k}{2},\frac{3k}{2}\Big),\Big(1,\frac{k}{2}-1,\frac{3k}{2}\Big),(1^k,k),(k,k)\Big) \\[2mm]
\mbox{or} & \displaystyle \lambda=\Big(\Big(\frac{k}{2},\frac{3k}{2}\Big),\Big(1,\frac{k}{2},\frac{3k}{2}-1\Big),(1^{k-1},k+1),(k,k)\Big)
\end{eqnarray*}
when $k$ is even.

\bigskip
\noindent
(d4)\  $\ell_1=2$, $\ell_2=3$, and $\ell_3\leq 5$; thus
$\mbfa=(p,n-p)$, $\mbfb=(b,q-b,n-q)$, and $\mbfc=(c_1,c_2,c_3,c_4,n-r)$
with $c_1,c_2,c_3,c_4\geq 0$ such that $c_1+c_2+c_3+c_4=r$:

We can assume that $d\geq 6$ hence $n\geq 12$, otherwise we are in case (a), (b), or (c). Moreover, $p,r\geq 3$, $q\geq 4$,
otherwise we are in case (d1), (d2), or (d3).
Easily we have
$c_1^2+c_2^2+c_3^2+c_4^2\geq 4\cdot(\frac{r}{4})^2=\frac{r^2}{4}$
and $b^2+(q-b)^2\geq\frac{q^2}{2}$. Using also (\ref{3.4}) and (\ref{3.7}), we get
$$
2(\lambda|\lambda)\geq 2p^2+\frac{3}{2}q^2+\frac{5}{4}r^2-\frac{n^2}{2}.
$$
Once again, the Lagrange multipliers rule tells us that the function $(x,y,z)\mapsto 2x^2+\frac{3}{2}y^2+\frac{5}{4}z^2$ attains its minimum on $\{(x,y,z)\in\mathbb{R}^3:x+y+z=n\}$ at a point $(x,y,z)$ with
$$
\left\{\begin{array}{ll}
4x & =\alpha \\ 3y & =\alpha \\ \frac{5}{2}z & = \alpha
\end{array}\right.
$$
for some $\alpha\in\mathbb{R}$.
Hence $n=\frac{\alpha}{4}+\frac{\alpha}{3}+\frac{2\alpha}{5}=\frac{59\alpha}{60}$. So $(x,y,z)=(\frac{15n}{59},\frac{20n}{59},\frac{24n}{59})$.
This yields
\begin{equation}\label{eq:ineq.for.n.Case:d4}
2(\lambda|\lambda)\geq 2\cdot\frac{(15n)^2}{59^2}+\frac{3}{2}\cdot\frac{(20n)^2}{59^2}+\frac{5}{4}\cdot\frac{(24n)^2}{59^2}-\frac{n^2}{2}=\frac{n^2}{118}>0,
\end{equation}
which shows that $(\lambda|\lambda)>0$ for all $\lambda$ in case (d4).

Moreover, from (\ref{eq:ineq.for.n.Case:d4}), we also obtain that $(\lambda|\lambda)>1$ for all $\lambda$ if $n\geq 16$. For $n\in\{12,13,14,15\}$, by exhausting all the cases, we see that the equality $(\lambda|\lambda)=1$ only happens for $n=12$ and $\lambda=((3,9),(2,2,8),(1^3,2,7),(6,6))$.

The proof of the theorem is complete.
\end{proof}

We emphasize the special case where the double flag variety $\Xfv=L/Q\times G/P$ is a product of four Grassmannian varieties.

\begin{corollary}\label{corollary:grassmannians}
Consider a double flag variety of the form
$$
\Xfv=L_1/Q_1 \times L_2/Q_2 \times L_3/Q_3 \times G/P
$$
where
$L=L_1\times L_2\times L_3$ is, as before, a Levi subgroup of $G=\GL_n$,
and assume that all the factors $L_i/Q_i$ are Grassmannian varieties
and $G/P=\Grass(d;n)$ is also the Grassmannian variety of $d$-dimensional subspaces in $\K^n$.
Then, $\Xfv$ has finitely many $L$-orbits in exactly the following two cases:
\begin{penumerate}
    \item $\min\{d,n-d\}\leq 2$;
    \item one of the Grassmannian varieties $L_i/Q_i \; (i = 1, 2, 3) $ is reduced to a point.
\end{penumerate}
\end{corollary}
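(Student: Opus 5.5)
The plan is to specialize Theorem \ref{Thm:classification.finite.type.MFV} to Grassmannian factors. Each factor $L_i/Q_i$ is a Grassmannian $\Grass(e_i;p_i)$, where $p_1=p$, $p_2=q$, $p_3=r$. Take $\lambda=(\mbfa,\mbfb,\mbfc,\mbfd)$ with $\mbfa'=(e_1,p-e_1)$, and similarly for $\mbfb',\mbfc'$, and $\mbfd=(d,n-d)$. After removing zero coefficients, $\ell_i=3$ when $L_i/Q_i$ is a genuine Grassmannian ($0<e_i<p_i$), and $\ell_i=2$ when it is a point. (If some $p_i=0$ the Levi has fewer blocks; then $\ell_i=1$, which is case (o), and I would treat it as part of (2).) Equivalently, $\Xfv$ is of finite type iff $\lambda$ satisfies condition (3) of that theorem, and I will check which of the cases (o), (a)--(d) can occur when all $\ell_i\in\{2,3\}$.

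\emph{Sufficiency.} If $\min\{d,n-d\}\le 2$, we are in case (a). If some $L_i/Q_i$ is a point, permute so that $\ell_1=2$. The remaining lengths satisfy $\ell_2,\ell_3\le 3$. If $\ell_2=2$, case (d3) applies. Otherwise $\ell_2=3$ and $\ell_3\le 3\le 5$, so case (d4) applies. In either situation $\Xfv$ has finitely many orbits.

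\emph{Necessity.} Assume $\min\{d,n-d\}\ge 3$ and that no factor is a point, so $\ell_1=\ell_2=\ell_3=3$ after removing zeros. Cases (b), (c) and (d) all require some $\ell_i=2$, case (o) requires some $\ell_i=1$, and case (a) is excluded. So none of the cases of (3) holds. One can also see this directly: $p,q,r\ge 2$ gives $a_3,b_3,c_3\ge 4$, hence $\lambda\ge\lambda^1$.

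The only subtle point is bookkeeping. Removing zeros has to be compatible with the permutation in the theorem, and we must check that no case of (3) can be reached by re-ordering the $\mbfa,\mbfb,\mbfc$. This is immediate because the lengths are permutation-invariant data.
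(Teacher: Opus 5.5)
Your proposal is correct and follows the paper's proof essentially verbatim. You specialize Theorem~\ref{Thm:classification.finite.type.MFV}\,(3): a point factor or $\min\{d,n-d\}\leq 2$ puts $\lambda$ in case (o), (a), (d3) or (d4), while if all $\ell_i=3$ and $\min\{d,n-d\}\geq 3$ none of the cases can occur. Your supplementary check that $\lambda\geq\lambda^1$ in that last situation is exactly the alternative argument given in the remark following the corollary.
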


\begin{proof}
By the assumption that all $ L_i/Q_i \; (i = 1, 2, 3) $ are Grassmannian varieties,
our double flag variety $\Xfv=\Xfv(\lambda)$ is associated with
a quadruple $\lambda=(\mbfa,\mbfb,\mbfc,\mbfd)$ of the form
$$
\lambda=((a_1,a_2,n-p),(b_1,b_2,n-q),(c_1,c_2,n-r),(d,n-d)).
$$
If we assume that $\Xfv$ has finitely many $L$-orbits, then
Theorem \ref{Thm:classification.finite.type.MFV}\,(3) tells us that either $\min\{d,n-d\}\leq 2$
(case (a) in Theorem \ref{Thm:classification.finite.type.MFV}\,(3))
or $\ell_i\in\{1,2\}$ for some $i\in\{1,2,3\}$,
which exactly means that
one of the factors $L_i/Q_i$ is reduced to a point.
Conversely, if $\min\{d,n-d\}\leq 2$ or $\ell_i\in\{1,2\}$ for some $i\in\{1,2,3\}$,
then one of the conditions (a), (o), (d3), (d4) in Theorem \ref{Thm:classification.finite.type.MFV}\,(3) is satisfied, which shows that $\Xfv$  has finitely many $L$-orbits.
\end{proof}

\begin{remark}
The case of a product of Grassmannian varieties, addressed in the corollary,
corresponds to the situation where
$\bfell=(3,3,3,2)$ in the notation of Section \ref{section:2}.
In this case, the semigroup $\Lambda(\bfell)$ contains a unique minimal element with nonpositive Tits form, namely
$$
\lambda^1=((1,1,4),(1,1,4),(1,1,4),(3,3))
$$
(i.e., $\lambda^2,\lambda^3$ involved in Theorem \ref{Thm:classification.finite.type.MFV} cannot be summands of any element in $\Lambda(\bfell)$).
Consequently, Theorem \ref{Thm:classification.finite.type.MFV} tells us that, for $\lambda\in\Lambda(\bfell)$,
$$
\mbox{$\Xfv(\lambda)$ is of finite type}
\quad\Longleftrightarrow\quad
\lambda\not\geq\lambda^1.
$$
It is easy to see that the relation $\lambda\not\geq\lambda^1$ holds exactly if one of the conditions (1) or (2) of Corollary \ref{corollary:grassmannians} is fulfilled, so that we again get the proof of the corollary in this way.
\end{remark}

\section{A parametrization of orbits}\label{section:4}

We follow The same setting as in Section \ref{section:classification.proofs}.
We say that a dimension vector $\lambda=(\mbfa,\mbfb,\mbfc,\mbfd)\in\Lambda(\bfell)$ (as in (\ref{form-lambda})) is {\it of finite type} if the double flag variety $\Xfv(\lambda)$ has finitely many $L$-orbits. Dimension vectors of finite type are therefore classified in Theorem \ref{Thm:classification.finite.type.MFV}.
Also, as it follows in general from Theorem \ref{theorem:finiteness.criterion}
(or as it appears in particular in Theorem \ref{Thm:classification.finite.type.MFV}), every summand $\mu$ of a dimension vector of finite type is of finite type.

If $\lambda$ is of finite type, then $(\lambda|\lambda)\geq 1$
(see Theorem \ref{theorem:finiteness.criterion}\,(1)).
Moreover, the dimension vectors $\lambda$ of finite type with Tits form $(\lambda|\lambda)=1$ play a key role in the parametrization of the orbits (see Theorem \ref{theorem:finiteness.criterion}\,(2));
the next theorem classifies these dimension vectors.

\begin{theorem}
\label{T:indecomposables}
Let $\bfell=(\ell_1,\ell_2,\ell_3,2)$ and let
$$
\lambda=(\mbfa,\mbfb,\mbfc,\mbfd)=((a_1,\ldots,a_{\ell_1}),(b_1,\ldots,b_{\ell_2}),(c_1,\ldots,c_{\ell_3}),(d_1,d_2))\in\Lambda(\bfell)
$$
be of finite type, with $n:=|\lambda|\geq 1$.
The following conditions are equivalent:
\begin{penumerate}
    \item $(\lambda|\lambda)=1$;
    \item $\lambda$ appears in one of Tables \ref{list.indecomposables:special}--\ref{list.indecomposables:odd} (up to permuting $\mbfa,\mbfb,\mbfc$, up to permutation within each tuple $\mbfa',\mbfb',\mbfc',\mbfd$, and up to removing the coefficients of $\mbfa',\mbfb',\mbfc'$ which are equal to $0$).
\end{penumerate}

\begin{table}[H]
    $\begin{array}{ccccc|c}
    \lambda: & \mbfa & \mbfb & \mbfc & \mbfd & n=|\lambda|
    \\[1mm] \hline
    & (1) & (1) & (1,0) & (0, 1) & 1
    \\[1mm] \hline
    & (2) & (1,1) & (1,1) & (1,1) & 2
    \\[1mm] \hline
    & (1,2) & (1,2) & (1,2) & (1,2) & 3
    \\[1mm] \hline
    & (1,3) & (1,3) & (1,1,2) & (2,2) & 4
    \\[1mm] \hline
    & (1,4) & (1,1,3) & (1,1,3) & (2,3) & 5
    \\[1mm] \hline
    & (1,1,4) & (1,1,4) & (1,1,4) & (2,4) & 6
    \\[1mm] \cline{1-5}
    & (1,5) & (1,1,4) & ( 1^3,3) & (3,3) &
    \\[1mm] \cline{1-5}
    & (2,4) & (1,1,4) & (1,1,4) & (3,3) &
    \\[1mm] \hline
    & (1,6) & (1^3,4) & (1^3,4) & (3,4) & 7
    \\[1mm] \cline{1-5}
    & (2,5) & (1,1,5) & (1^3,4) & (3,4) &
    \\[1mm] \hline
    & (2,6) & (1^3,5) & (1^3,5) & (3,5) & 8
    \\[1mm] \cline{1-5}
    & (2,6) & (1,1,6) & (1^4,4) & (4,4) &
    \\[1mm] \cline{1-5}
    & (2,6) & (1,2,5) & (1^3,5) & (4,4)
    \\[1mm] \hline
    & (2,7) & (1,2,6) & (1^4,5) & (4,5) & 9
    \\[1mm] \hline
    & (2,8) & (1,2,7) & (1^5,5) & (5,5)
    \\[1mm] \cline{1-5}
    & (2,8) & (2,2,6) & (1^4,6) & (5,5) & 10
    \\[1mm] \cline{1-5}
    & (3,7) & (1,2,7) & (1^4,6) & (5,5) &
    \\[1mm] \hline
    & (2,9) & (2,2,7) & (1^5,6) & (5,6) & 11
    \\[1mm] \cline{1-5}
    & (3,8) & (1,2,8) & (1^5,6) & (5,6) &
    \\[1mm] \cline{1-5}
    & (3,8) & (2,2,7) & (1^4,7) & (5,6) &
    \\[1mm] \hline
    & (3,9) & (2,2,8) & (1^5,7) & (5,7) & 12
    \\[1mm] \hline
    & (2,10) & (2,2,8) & (1^6,6) & (6,6) &
    \\[1mm] \hline
    & (3,9) & (2,2,8) & (1^3,2,7) & (6,6) &
    \\[1mm] \hline
    \end{array}$
\caption{Dimension vectors $\lambda$ of finite type with $(\lambda|\lambda)=1$: special cases ($|\lambda|\leq 12$)}
\label{list.indecomposables:special}
\end{table}

\begin{table}[H]
{\footnotesize
$\begin{array}{ccccc|c}
    \lambda: & \mbfa & \mbfb & \mbfc & \mbfd & n=|\lambda|
    \\[1mm] \hline
    & (1,2 k - 1) & (1^{k - 1},k + 1) & (1^k,k) & (k,k) & 2k\ (k\geq 4)
    \\[1mm] \hline
    & (m, 3 m) & (1, m - 1, 3 m) & (1^{2m}, 2m) & (2m, 2m) & 4m\ (m\geq 3)
    \\[1mm] \cline{1-5}
    & (m, 3 m) & (1, m, 3 m - 1) & (1^{2m - 1}, 2m + 1) & (2m, 2m)
    \\[1mm] \hline
    & (m, 3 m + 2) & (1, m, 3 m + 1) & (1^{2m + 1}, 2m + 1) & (2m + 1, 2m + 1) & 4m+2\ (m\geq 3)
    \\[1mm] \cline{1-5}
    & (m + 1, 3 m + 1) & (1, m, 3 m + 1) & (1^{2m}, 2m + 2) & (2m + 1, 2m + 1)
    \\[1mm] \hline
    \end{array}$}
\caption{Dimension vectors $\lambda$ of finite type with $(\lambda|\lambda)=1$: infinite series, $|\lambda|$ even}
\label{list.indecomposables:even}
\end{table}

\begin{table}[H]
{\footnotesize
$\begin{array}{ccccc|c}
    \lambda: & \mbfa & \mbfb & \mbfc & \mbfd & n=|\lambda|
    \\[1mm] \hline
    & (1,2 k) & (1^k,k + 1) & (1^k,k + 1) & (k,k + 1) & 2k+1\ (k\geq 4)
    \\[1mm] \hline
    & (m, 3 m + 1) & (1, m, 3 m) & (1^{2m}, 2m + 1) & (2 m , 2m + 1) & 4m+1\ (m\geq 3)
    \\[1mm] \hline
    & (m + 1, 3 m + 2) & (1, m, 3 m + 2) & (1^{2m + 1}, 2m + 2) & (2 m + 1, 2m + 2) & 4m+3\ (m\geq 3)
    \\[1mm] \hline
    \end{array}$}
\caption{Dimension vectors $\lambda$ of finite type with $(\lambda|\lambda)=1$: infinite series, $|\lambda|$ odd}
\label{list.indecomposables:odd}
\end{table}
\end{theorem}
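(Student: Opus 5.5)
Since $\lambda$ is of finite type, Theorem \ref{Thm:classification.finite.type.MFV} places it, after permuting $\mbfa,\mbfb,\mbfc$, removing zero coefficients of $\mbfa',\mbfb',\mbfc'$, and sorting each tuple, in one of the cases (o), (a)--(d) of part (3). We may further assume $\ell_1\le\ell_2\le\ell_3$ and that each of $\mbfa',\mbfb',\mbfc',\mbfd$ is nondecreasing, so $d\le n/2$. The implication (1)$\Rightarrow$(2) is then essentially a recollection of the equality analysis already carried out in the proof of (3)$\Rightarrow$(1) of that theorem. In each case the lower bound for $2(\lambda|\lambda)$ came from the chain (\ref{3.4})--(\ref{3.7}) together with either a Lagrange multiplier minimization or the exact integer minimization done in (d3). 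I will go through the cases in turn and collect the $\lambda$ with $(\lambda|\lambda)=1$.

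\begin{itemize}
\item[$n\le5$:] We get the first five rows of Table \ref{list.indecomposables:special}.
\item[(o):] There are none, since $2(\lambda|\lambda)\ge n\ge 6$.
\item[(a):] Only $((1,1,4),(1,1,4),(1,1,4),(2,4))$.
\item[(b):] The five vectors listed for $n\in\{6,7,8\}$.
\item[(c):] The two vectors for $n=8$ and the eight vectors for $9\le n\le 12$.
\item[(d1):] The two series $((1,2k),(1^k,k+1),(1^k,k+1),(k,k+1))$ and $((1,2k-1),(1^{k-1},k+1),(1^k,k),(k,k))$.
\item[(d2):] Only $((2,10),(2,2,8),(1^6,6),(6,6))$.
\item[(d3):] The four parity families found in Cases 1 and 2.
\item[(d4):] Only $((3,9),(2,2,8),(1^3,2,7),(6,6))$.
\end{itemize}

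The remaining work is bookkeeping, in two parts. First, rewrite the (d3) families, which are indexed by $k$ with $n=2k$ or $2k+1$ and $k$ odd or even, in terms of $m$ with $n\in\{4m,4m+1,4m+2,4m+3\}$, and check they match Tables \ref{list.indecomposables:even}--\ref{list.indecomposables:odd}. Note that $\mbfb=(1,\cdot,\cdot)$ and the $\mbfa$-part is $(m,\cdot)$ or $(m+1,\cdot)$, up to the allowed reorderings within tuples. Second, handle small parameters. The (d1) series with $k\le3$, and the (d3) families with $m\le2$ or with $p\le1$ (which were excluded there as belonging to (d1)), either coincide with entries of Table \ref{list.indecomposables:special} or are subsumed by another case. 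This explains the bounds $k\ge4$ and $m\ge3$ in the tables. I also have to make sure the same vector, reached through two overlapping cases (for instance (c) versus (d3) when $d\in\{4,5\}$), gives no conflict. Listing it twice is harmless, since the claim is only about membership in the tables.

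For the converse (2)$\Rightarrow$(1), compute $(\lambda|\lambda)$ directly from (\ref{3.4}) for each table entry. For the finitely many special entries this is a direct computation. For each infinite series it is a polynomial identity in $k$ or $m$, which should reduce identically to $2$; for example, the (d1) computation already gives exactly $2$ in both parities. Each entry must also be of finite type, and this follows because each one falls under a case of Theorem \ref{Thm:classification.finite.type.MFV}(3): (a), (b), (c), (d1), (d2), (d3), or (d4).

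The main obstacle is not conceptual. It is the exhaustive finite checks hidden in the proof of Theorem \ref{Thm:classification.finite.type.MFV}: the cases $n\le5$, the triples $(p,q,r)$ for $n=8$ and $9\le n\le12$ in (c), and $12\le n\le15$ in (d4). Here I must verify that exactly the listed vectors attain equality. I would organize each check by first fixing $(p,q,r,d)$ with the slack in (\ref{3.5})--(\ref{3.7}) at most $2$, using the quadratic lower bound to bound $n$, and only then enumerating the compositions $\mbfa',\mbfb',\mbfc'$.
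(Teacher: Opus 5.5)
Your proposal is correct and follows the paper's own proof, which simply reads off the equality cases $(\lambda|\lambda)=1$ from the case analysis (o), (a)--(c), (d1)--(d4) in the proof of (3)$\Rightarrow$(1) of Theorem \ref{Thm:classification.finite.type.MFV}. Your explicit verification of (2)$\Rightarrow$(1) by evaluating (\ref{3.4}) on each table entry just spells out the ``if'' half of those if-and-only-if equality statements; the finite-type check you add there is unnecessary, since finite type is a hypothesis. One small bookkeeping correction: (d3) yields six families, not four (two for odd $n$, four for even $n$), which is why the rows $n=4m$ and $n=4m+2$ of Table \ref{list.indecomposables:even} each contain two entries.
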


\begin{proof}
The classification follows from the analysis of the
cases of equality $(\lambda|\lambda)=1$ which are
made in the course of the proof of the parts (o), (a)--(c), (d1)--(d4) for the implication (3)$\Rightarrow$(1) of Theorem \ref{Thm:classification.finite.type.MFV}.
\end{proof}

\begin{corollary}
\label{C:orbits}
Let $\bfell=(\ell_1,\ell_2,\ell_3,2)$ and let
$\lambda=(\mbfa,\mbfb,\mbfc,\mbfd)\in\Lambda(\bfell)$
be a dimension vector of finite type.
Then, there is a one-to-one correspondence between the $L$-orbits of $\Xfv(\lambda)$ and the decompositions (up to permutation of the terms)
$\lambda=\sum_{i=1}^k \mu_i$ into summands $\mu_i\in\Lambda(\bfell)$ satisfying the equivalent conditions (1)--(2) of Theorem \ref{T:indecomposables}.
\end{corollary}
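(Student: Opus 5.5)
This is essentially a direct specialization of Theorem \ref{theorem:finiteness.criterion}\,(2) to the quiver $\quiver(\bfell)$ with $\bfell=(\ell_1,\ell_2,\ell_3,2)$, combined with the classification in Theorem \ref{T:indecomposables}. I would proceed in three short steps.

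\emph{Step 1: every summand satisfies the criterion.} Since $\lambda$ is of finite type, Theorem \ref{theorem:finiteness.criterion}\,(1) gives $(\mu|\mu)\geq 1$ for every summand $\mu$ of $\lambda$ in $\Lambda(\bfell)$. Hence part (2) of that theorem applies. It yields a bijection between $L$-orbits of $\Xfv(\lambda)$ and multisets $\{\mu_1,\ldots,\mu_k\}$ with $\mu_i\in\Lambda(\bfell)$, $(\mu_i|\mu_i)=1$, and $\sum\mu_i=\lambda$.

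\emph{Step 2: translate the Tits-form condition.} Each such $\mu_i$ is a summand of $\lambda$, because $\lambda-\mu_i=\sum_{j\neq i}\mu_j\in\Lambda(\bfell)\cup\{0\}$. Summands of a finite-type vector are themselves of finite type; this follows from Theorem \ref{theorem:finiteness.criterion}\,(1), since a summand of $\mu_i$ is a summand of $\lambda$. Also $|\mu_i|\geq 1$ because $\mu_i$ is nonzero. So each $\mu_i$ satisfies the hypotheses of Theorem \ref{T:indecomposables}, and for it condition (1) there, $(\mu_i|\mu_i)=1$, is equivalent to condition (2), membership in the tables. Conversely, a decomposition into summands satisfying (1)--(2) of Theorem \ref{T:indecomposables} is by definition a decomposition into $\mu_i$ with $(\mu_i|\mu_i)=1$. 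Therefore the two sets of decompositions coincide, and the bijection of Step 1 is the claimed one.

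\emph{Step 3: bookkeeping.} The tables are read up to permuting the branches, up to permuting entries within each tuple, and up to inserting or removing zeros in $\mbfa',\mbfb',\mbfc'$. Step 2 only uses the equivalence (1)$\Leftrightarrow$(2) for a fixed $\mu_i\in\Lambda(\bfell)$, so these normalizations do not interfere with the argument. This is harmless because the Tits form is invariant under all three operations, as noted in Lemma \ref{L-summand}.

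I expect no real obstacle here. The only point needing care is checking that each $\mu_i$ is of finite type, so that Theorem \ref{T:indecomposables} is applicable to it; this is exactly the heredity of finite type under taking summands, handled in Step 2.
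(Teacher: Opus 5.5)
Your proposal is correct and follows the paper's proof, which simply invokes Theorem \ref{theorem:finiteness.criterion}\,(2). You also make explicit the heredity of finite type under taking summands, which is needed so that Theorem \ref{T:indecomposables} applies to each $\mu_i$; the paper records this fact just before stating Theorem \ref{T:indecomposables} rather than in the proof itself.
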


\begin{proof}
This follows from Theorem \ref{theorem:finiteness.criterion}\,(2).
\end{proof}

\section{Example: orbits in the case of the Grassmannian variety $ G/P = \Grass(2;n) $}

\label{section:5}

In this section we consider the case where $\mbfd=(2,n-2)$.
By Theorem \ref{Thm:classification.finite.type.MFV}, we know that:

\begin{proposition}
The double flag variety $\Xfv(\mbfa,\mbfb,\mbfc,(2,n-2))$
is of finite type, for any choice of $\mbfa,\mbfb,\mbfc$.
In other words, $\Grass(2;n)$ is an $L$-spherical variety
whenever $L=\GL_p\times \GL_q\times \GL_r$ with $p+q+r=n$.
\end{proposition}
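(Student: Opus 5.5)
This is a direct specialization of Theorem \ref{Thm:classification.finite.type.MFV}. Take $\lambda=(\mbfa,\mbfb,\mbfc,(2,n-2))\in\Lambda(\bfell)$ with $\bfell=(\ell_1,\ell_2,\ell_3,2)$ and $\mbfa,\mbfb,\mbfc$ arbitrary. Then $\min\{d_1,d_2\}=\min\{2,n-2\}\leq 2$. Hence $\lambda$ falls under case (a) of Theorem \ref{Thm:classification.finite.type.MFV}\,(3); this condition is unaffected by permuting $\mbfa,\mbfb,\mbfc$ or by deleting zero coefficients. The implication (3)$\Rightarrow$(1) of that theorem then shows that $\Xfv(\lambda)$ is of finite type. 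For $n\leq 3$, where $n-2$ may be $0$ or $1$, the same inequality holds, so nothing changes.

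One could also bypass the theorem by citing part (a) of its proof. There, for $d\leq 2$, the bound $2(\lambda|\lambda)\geq \frac{n^2}{3}-3n+8\geq 2$ is obtained for $n\geq 6$, and positivity is obtained separately for $n\leq 5$. Case (a) is closed under taking summands, since the first entry of $\mbfd$ can only decrease. So every summand $\mu$ satisfies $(\mu|\mu)\geq 1$, and Theorem \ref{theorem:finiteness.criterion}\,(1) gives finite type. This is exactly the argument already packaged in the theorem, so I will simply cite it.

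For the reformulation, I would take $\mbfa'=(1^p)$, $\mbfb'=(1^q)$, $\mbfc'=(1^r)$. Then $Q$ is a Borel subgroup $B_L$ of $L$, and the $L$-orbits on $\Xfv(\lambda)=L/B_L\times\Grass(2;n)$ correspond bijectively to the $B_L$-orbits on $\Grass(2;n)$, as noted in the introduction. Finiteness of these $B_L$-orbits is equivalent to $\Grass(2;n)$ being $L$-spherical: finitely many orbits give an open $B_L$-orbit, and conversely the standard Brion--Vinberg theorem says a spherical variety has finitely many Borel orbits. In any case, the forward direction is all that the statement's ``in other words'' requires.

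There is no real obstacle here. The only thing to check is that the hypothesis of case (a), namely $\min\{d_1,d_2\}\leq 2$, is read off correctly when $d_1=2$. The case $n-2<2$ is also harmless, because then the minimum is even smaller.
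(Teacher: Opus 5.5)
Your proposal is correct and is essentially the paper's argument: the paper obtains this proposition directly from case (a), $\min\{d_1,d_2\}\leq 2$, of Theorem \ref{Thm:classification.finite.type.MFV}. Your specialization to $Q=B_L$ for the sphericity reformulation matches how the paper uses the result in that section.
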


In order to give a better insight of this proposition, we aim to describe explicitly the orbits of $\Xfv=\Xfv(\mbfa,\mbfb,\mbfc,(2,n-2))=L/Q\times \Grass(2;n)$
in the extreme case where $Q=B_L$ is a Borel subgroup of $L$. Hence
$$
\Xfv=\Flags(1^p)\times \Flags(1^q)\times \Flags(1^r)\times \Grass(2;n)
$$
where $\Flags(1^p),\Flags(1^q),\Flags(1^r)$ are full flag varieties.
The $L$-orbits of $\Xfv$ naturally correspond to the $B_L$-orbits of $\Grass(2;n)$.

Specifically, we fix a standard decomposition
$V:=\K^n=A\oplus B\oplus C$
where $A,B,C$ have respective dimensions $p,q,r$. Then
$\Flags(1^p),\Flags(1^q),\Flags(1^r)$ are understood as the varieties of complete flags of $A,B,C$, respectively.
Let $(e^A_1,\ldots,e^A_p)$ be a fixed basis of $A$, and we fix similarly bases of $B$ and $C$.
Let $F^A=(\langle e^A_1,\ldots,e^A_i\rangle)_{i=1}^p$
be the standard flag of $A$, and define similarly $F^B$ and $F^C$, so that
$B_L$ becomes the stabilizer of the triple $(F^A,F^B,F^C)$.

Every $L$-orbit of $\Xfv$ has a representative of the form $(F^A,F^B,F^C,M)$, where $M\in\Grass(2;n)$
is a representative of the corresponding $B_L$-orbit of $\Grass(2;n)$. Our purpose is to parametrize these orbits and to specify a subspace $M$ attached to each orbit.

In order to parametrize the orbits, by Corollary \ref{C:orbits}, we need to determine the decompositions (in the semigroup $\Lambda(\bfell)$ for $\bfell=(p+1,q+1,r+1,2)$) of the dimension vector
\begin{equation} \label{dimension.vector:lambda}
\lambda=((1^p,n-p),(1^q,n-q),(1^r,n-r),(2,n-2))
\end{equation}
into summands $\mu$ satisfying the conditions of Theorem \ref{T:indecomposables}.
We will speak of a decomposition of $\lambda$ into {\em indecomposable summands}.

The possible summands $\mu=(\mbfa,\mbfb,\mbfc,\mbfd)$ that can arise in the decomposition of $\lambda$ are obtained from Theorem \ref{T:indecomposables} and we list them (up to adding $0$'s within $\mbfa',\mbfb',\mbfc'$ and permuting $\mbfa,\mbfb,\mbfc$) in Table \ref{list.indecomposables:Gr(2;n)}.
Also for every such $\mu$ we indicate the number $|\mu|=|\mbfa|=|\mbfb|=|\mbfc|=|\mbfd|$ as well as the composition $(p_\mu,q_\mu,r_\mu):=(|\mbfa'|,|\mbfb'|,|\mbfc'|)$.
\begin{table}[H]
    $\begin{array}{c|cccc|c|c|c}
     & \mbfa & \mbfb & \mbfc & \mbfd & (p_\mu,q_\mu,r_\mu) & |\mu| & M_\mu
    \\[1mm] \hline
    \mu_1 & (1) & (1) & (1,0) & (0,1)  & (0,0,1) & 1 & \{0\}
    \\[1mm] \hline
    \mu'_1 & (1) & (1) & (1,0) & (1,0)  & (0,0,1) & 1 & \langle \varepsilon_1^C\rangle
    \\[1mm] \hline
    \mu_2 & (2) & (1,1) & (1,1) & (1,1)   & (0,1,1) & 2 & \langle \varepsilon_1^B+\varepsilon_1^C\rangle
    \\[1mm] \hline
    \mu_3 & (1,2) & (1,2) & (1,2) & (1,2)   & (1,1,1) & 3 & \langle \varepsilon_1^A+\varepsilon_1^B+\varepsilon_1^C\rangle
    \\[1mm] \hline
    \mu'_3 & (1,2) & (1,2) & (1,2) & (2,1)   & (1,1,1) & 3
    & \langle \varepsilon_1^A+\varepsilon_1^B,\varepsilon_1^B+\varepsilon_1^C\rangle
    \\[1mm] \hline
    \mu_4 & (1,3) & (1,3) & (1,1,2) & (2,2)   & (1,1,2) & 4 & \langle \varepsilon_1^A+\varepsilon_1^B+\varepsilon_1^C,
    \varepsilon_1^A-\varepsilon_1^B+\varepsilon_2^C \rangle
    \\[1mm] \hline
    \mu_5 & (1,4) & (1,1,3) & (1,1,3) & (2,3)   & (1,2,2) & 5
    & \langle \varepsilon_1^A+\varepsilon_1^B+\varepsilon_2^C,
    \varepsilon_1^A+\varepsilon_2^B+\varepsilon_1^C \rangle
    \\[1mm] \hline
    \mu_6 & (1,1,4) & (1,1,4) & (1,1,4) & (2,4)   & (2,2,2) & 6
    & \langle \varepsilon_1^A+\varepsilon_2^B+\varepsilon_2^C,
    \varepsilon_2^A+\varepsilon_1^B+\varepsilon_1^C+\varepsilon_2^C \rangle
    \\[1mm] \hline
    \end{array}$
\caption{Possible indecomposable summands $\mu$ of $\lambda$}
\label{list.indecomposables:Gr(2;n)}
\end{table}

For every $\mu$ in Table \ref{list.indecomposables:Gr(2;n)}, the variety $\Xfv(\mu)$ is of finite type and has a unique dense orbit for the action of the appropriate Levi subgroup
$$L(\mu):=\GL_{p_\mu}\times \GL_{q_\mu}\times \GL_{r_\mu}\cong \GL(A_\mu)\times\GL(B_\mu)\times\GL(C_\mu)\subset \GL(V_\mu)\cong\GL_{|\mu|},$$
where $V_\mu:=\K^{|\mu|}$. 
In fact $\Xfv(\mu)$ is of the form
$\Flags(A_\mu)\times\Flags(B_\mu)\times\Flags(C_\mu)\times\Grass(d_1;V_\mu)$
and its dense orbit naturally corresponds to a dense $B(\mu)$-orbit of $\Grass(d_1;V_\mu)$, where $B(\mu)\subset L(\mu)$ is the standard (upper-triangular) Borel subgroup. In the table, for every $\mu$, we have also indicated a representative $M_\mu$ of that dense $B(\mu)$-orbit. To do this, we denote by $(\varepsilon_i^A)_{i=1}^{p_\mu}, (\varepsilon_j^B)_{j=1}^{q_\mu},(\varepsilon_k^C)_{k=1}^{r_\mu}$ the standard bases of $A_\mu,B_\mu,C_\mu$.

An indecomposable summand $\mu=(\mbfa,\mbfb,\mbfc,(d_1,d_2))\in\Lambda(\bfell)$ of $\lambda$
satisfies $d_1\in\{0,1,2\}$. We say that the summand $\mu$ is of type $0$, $1$, or $2$, depending on the value of $d_1$.
Equivalently, after removing the $0$'s from $\mbfa',\mbfb',\mbfc'$ and up to permuting $\mbfa,\mbfb,\mbfc$,
$\mu$ is of type $0$ if and only if $ \mu = \mu_1 $ in Table \ref{list.indecomposables:Gr(2;n)};
$\mu$ is of type $1$ if and only if $ \mu = \mu_1',\mu_2$ or $\mu_3$ in Table \ref{list.indecomposables:Gr(2;n)};
$\mu$ is of type $2$ if and only if $ \mu = \mu_3',\mu_4,\mu_5$ or $\mu_6$ in Table \ref{list.indecomposables:Gr(2;n)}.

Equivalently, $\mu$ is of type $0$, $1$, or $2$, if and only if, after removing the $0$'s from $\mbfa',\mbfb',\mbfc'$ and up to permuting $\mbfa,\mbfb,\mbfc$, $\mu$ coincides with $\mu_1$, respectively, $\mu_1',\mu_2$ or $\mu_3$, respectively, $\mu_3',\mu_4,\mu_5$ or $\mu_6$, with the notation of Table \ref{list.indecomposables:Gr(2;n)}.
A decomposition of $\lambda$ into indecomposable summands
\begin{itemize}
    \item either contains exactly two summands of type $1$ and no summand of type $2$,
    \item or contains no summand of type $1$ and exactly one summand of type $2$.
\end{itemize}
The rest of the summands are of type $0$ (and they are completely determined by the previous summands of type $1$ or $2$). In the former case we say that the decomposition of $\lambda$ is of type $1$, in the latter case
we say that it is of type $2$.

Decompositions of $\lambda$ can be encoded by the following combinatorial data. We call {\em colored diagram of shape $(p,q,r)$}
a diagram $\eta$ with three rows of, respectively, $p,q,r$ empty boxes, some of them being colored according to the following rule:
\begin{itemize}
    \item either $\eta$ contains two disjoint sets of gray-colored boxes, each set containing at most one box in each row (we use two interchangeable shades of gray, one for each set -- the colored diagram is considered the same if we switch the two shades of gray);
    \item or $\eta$ contains a single set of black-colored boxes, this set containing at least one and at most two boxes in each row.
\end{itemize}
Every such colored diagram $\eta$ gives rise to a decomposition of $\lambda$.
\begin{itemize}
\item Each set of colored boxes in $\eta$ corresponds to an indecomposable summand $\mu=(\mbfa,\mbfb,\mbfc,\mbfd)$ of $\lambda$ of type $1$ (in case of a set of gray boxes) or of type $2$ (in case of black boxes).
\begin{itemize}
\item The number of colored boxes in the set is the number $|\mu|$ attached to the summand. 
\item The positions of the colored boxes indicate the  nonzero coefficients, equal to $1$, in $\mbfa',\mbfb',\mbfc'$; this determines $\mbfa=(\mbfa',|\mu|-|\mbfa'|)$ and similarly $\mbfb$ and $\mbfc$.
\item We have $\mbfd=(1,|\mu|-1)$ in case of a set of gray boxes, or $\mbfd=(2,|\mu|-2)$ in case of black boxes.
\end{itemize}
\item The blank boxes of $\eta$ correspond to summands of type $0$, which complete the decomposition.
\begin{itemize}
    \item Every summand $\mu$ of type $0$ is such that $|\mu|=1$, $\mbfd=(0,1)$, and there is a single nonzero coefficient in $\mbfa'$, $\mbfb'$, or $\mbfc'$, equal to $1$, determined by the position of the considered blank box of $\eta$.
\end{itemize}
\end{itemize}
Conversely, every decomposition of $\lambda$ into indecomposable summands is obtained in this way, from some colored diagram of shape $(p,q,r)$.

\begin{example}
\label{Examples:colored.diagrams}
(a) If $\lambda=((1,1,3),(1,1,3),(1,4),(2,3))$, the colored diagram
$$
\eta=\ydiagram{2,2,1}
     *[*(white)]{0,1,0}
     *[*(lightgray)]{1,0,0} *[*(gray)]{0,2,1}
$$
has two sets of gray boxes which give rise to two indecomposable summands of type $1$, namely
$\mu=((1,0,0),(0,0,1),(0,1),(1,0))$
and $\mu'=((0,0,2),(0,1,1),(1,1),(1,1))$,
while the two blank boxes give rise to the summands
$\nu=((0,1,0),(0,0,1),(0,1),(0,1))$ and $\nu'=((0,0,1),(1,0,0),(0,1),(0,1))$ of type $0$.
Hence the colored diagram corresponds to the decomposition
\begin{eqnarray*}
\lambda & = & \mu+\mu'+\nu+\nu' \\
 & = & ((1,0,0),(0,0,1),(0,1),(1,0))+((0,0,2),(0,1,1),(1,1),(1,1))+(\ldots).
\end{eqnarray*}
On the second line, we only indicate the summands of type $1$, since the remaining summands are of type $0$ (incorporated in the symbol ``$(\ldots)$'') are in fact completely determined by them.
We use the same convention in the next examples.

(b) Let $\lambda=((1^3,6),(1^4,5),(1^2,7),(2,7))$, hence $(p,q,r)=(3,4,2)$ and $n=9$.
The colored diagrams
$$
\eta=\ydiagram{3,4,2}
     *[*(lightgray)]{0,1,0}
     *[*(white)]{2,2,0}
     *[*(gray)]{0,0,1}
     *[*(lightgray)]{3,0,2} *[*(gray)]{0,3,1}\quad\text{and}\quad \eta'=\ydiagram{3,4,2}
     *[*(white)]{0,1,1} *[*(black)]{1,3,2}
$$
respectively correspond to the decompositions
$$
\lambda=((0,0,1,2),(1,0,0,0,2),(0,1,2),(1,2))+((0^3,2),(0,0,1,0,1),(1,0,1),(1,1))+(\ldots)
$$
(which is a decomposition of type $1$)
and
$$
\lambda=((1,0,0,3),(0,1,1,0,2),(0,1,3),(2,2))+(\ldots)
$$
(which is a decomposition of type $2$).
\end{example}

We now attach an element $M_\eta\in\Grass(2;n)$ to every colored diagram $\eta$ of shape $(p,q,r)$.
Every set $S$ of colored boxes in $\eta$ corresponds to an indecomposable summand of $\lambda$, and we rely on the subspaces $M_\mu$ associated to the indecomposable summands and described in Table \ref{list.indecomposables:Gr(2;n)}.

The basis $(e_i^A)_{i=1}^p\cup (e_j^B)_{j=1}^q\cup (e_k^C)_{k=1}^r$ of the space $V=\K^n$ can be indexed on the boxes of $\eta$ so that $e_1^A,\ldots,e_p^A$ correspond to the boxes of the first row, and similarly $(e_j^B)_{j=1}^q$ and $(e_k^C)_{k=1}^r$ correspond to the second and to the third row;
we denote by $e_x$ the basis vector corresponding to the box $x$ of $\eta$.

If $\eta$ contains two sets $S_1$ and $S_2$ of gray-colored boxes, we define
$$
M_\eta=\langle \sum_{x\in S_1}e_x,\sum_{x\in S_2}e_x\rangle.
$$

If $\eta$ contains a single set $S$ of black-colored boxes, then $S$ contains at least one element per row. Let $x,y,z$ be the left-most elements of $S$ appearing in each row, named so that the row of $x$ has a minimal number of elements of $S$ and the row of $z$ has a maximal number of elements of $S$ -- if two rows have the same number of elements of $S$, we name first the element in the upper row. When it exists, let $x'$ be the second element of $S$ in the same row as $x$, and define similarly $y'$ and $z'$.
\begin{itemize}
    \item If $\#S=3$, i.e., $S=\{x,y,z\}$, then set
    $M_\eta=\langle e_{x}+e_{y},e_{y}+e_{z}\rangle$.
    \item If $\#S=4$, i.e., $S=\{x,y,z,z'\}$, then set
    $M_\eta=\langle e_{x}+e_{y}+e_{z},e_{x}-e_{y}+e_{z'}\rangle$.
    \item If $\#S=5$, i.e., $S=\{x,y,y',z,z'\}$, then set
    $M_\eta=\langle e_{x}+e_{y}+e_{z'},e_{x}+e_{y'}+e_{z}\rangle$.
    \item If $\#S=6$, i.e., $S=\{x,x',y,y',z,z'\}$, set
    $M_\eta=\langle e_{x}+e_{y'}+e_{z'},e_{x'}+e_{y}+e_{z}+e_{z'}\rangle$.
\end{itemize}

\begin{example}
For $\eta$ as in Example \ref{Examples:colored.diagrams}\,(a), we have
$$
M_\eta=\langle e_1^A,e_2^B+e_1^C \rangle.
$$
For $\eta$ and $\eta'$ as in Example \ref{Examples:colored.diagrams}\,(b), we obtain
$$
M_\eta=\langle e_3^A+e_1^B+e_2^C,e_3^B+e_1^C\rangle,\quad
M_{\eta'}=\langle e_1^A+e_2^C+e_2^B,e_1^A-e_2^C+e_3^B\rangle.
$$
\end{example}

The next result summarizes the constructions made in this section.
It relies on Corollary \ref{C:orbits}, taking also Proposition \ref{proposition:orbits.in.xfv.and.RepQ'} into account.

\begin{theorem}
Let $n=p+q+r\geq 2$, $L=\GL_p\times\GL_q\times\GL_r$, and let $B_L\subset L$ be the Borel subgroup of upper-triangular matrices.
The $L$-orbits of $\Flags(1^p)\times\Flags(1^q)\times\Flags(1^r)\times\Grass(2;n)$, and thus also the $B_L$-orbits of $\Grass(2;n)$,
are parametrized by the colored diagrams of shape $(p,q,r)$.
Specifically, the map
$$
\{\text{\rm colored diagrams of shape $(p,q,r)$}\}\to \Grass(2;n)/B_L,\quad \eta\mapsto B_L\cdot M_\eta
$$
is a bijection.
\end{theorem}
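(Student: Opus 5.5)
The plan has two parts. First we show that colored diagrams of shape $(p,q,r)$ are in bijection with decompositions of $\lambda$ in (\ref{dimension.vector:lambda}) into indecomposable summands. Then we check that for each diagram $\eta$ the point $(F^A,F^B,F^C,M_\eta)$ realizes, as a representation of $\quiver$, the direct sum $\bigoplus\pi^{\mu_i}$ attached to that decomposition. Corollary \ref{C:orbits} and Proposition \ref{proposition:orbits.in.xfv.and.RepQ'} then give the bijection with $L$-orbits of $\Xfv$, hence with $B_L$-orbits of $\Grass(2;n)$.

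\textbf{Combinatorial step.} Every summand $\mu$ of $\lambda$ has $\mbfa',\mbfb',\mbfc'$ with entries in $\{0,1\}$, since $\lambda$ does, and has $d_1\le 2$. Going through Tables \ref{list.indecomposables:special}--\ref{list.indecomposables:odd}, we keep exactly the entries whose $\mbfa',\mbfb',\mbfc'$ are $0/1$ vectors after reordering and with $d_1\le2$. These are $\mu_1,\mu_1',\mu_2,\mu_3,\mu_3',\mu_4,\mu_5,\mu_6$ of Table \ref{list.indecomposables:Gr(2;n)}, where the permutations allowed in Theorem \ref{T:indecomposables} account for placing the $1$'s anywhere. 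Adding the $d_1$ coordinates forces either two summands of type $1$ or one of type $2$, with the remaining summands $\mu_1$ filling the rest. Recording the positions of the $1$'s of each type-$1$ or type-$2$ summand as colored boxes then gives the encoding.
\begin{itemize}
\item A type-$1$ summand has at most one box per row, and $\mu_1'$, $\mu_2$, $\mu_3$ correspond to $1$, $2$, $3$ boxes respectively.
\item A type-$2$ summand $\mu_3',\dots,\mu_6$ has row counts $(1,1,1)$, $(1,1,2)$, $(1,2,2)$, $(2,2,2)$ up to permutation, i.e.\ between one and two boxes in every row.
\end{itemize}
So both directions of the correspondence are a direct check.

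\textbf{Realization step.} For each $\mu$ in Table \ref{list.indecomposables:Gr(2;n)} we verify that $M_\mu$ lies in the dense $B(\mu)$-orbit of $\Grass(d_1;V_\mu)$. By Lemma \ref{lemma:criterion.dense.orbit}(1) with $(\mu|\mu)=1$, this amounts to showing that the stabilizer of $M_\mu$ in $B(\mu)$ has dimension $\dim B(\mu)-\dim\Grass(d_1;V_\mu)$. In the realization step, this stabilizer computation on the small spaces of dimension at most $6$ (for $\mu_4,\mu_5,\mu_6$) is the main obstacle. I would carry it out by writing a general upper-triangular $g$ and imposing $gM_\mu=M_\mu$, or equivalently by checking that $\dim B(\mu)\cdot M_\mu$ equals $\dim\Grass$ through the tangent map $\mathfrak b(\mu)\to\Hom(M_\mu,V_\mu/M_\mu)$. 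Theorem \ref{theorem:finiteness.criterion}(3) then shows that the resulting object is $\pi^\mu$.

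\textbf{Gluing step.} For a diagram $\eta$, split $V=\bigoplus_S V_S\oplus\bigoplus_{\text{blank }x}\K e_x$ with $V_S=\langle e_x:x\in S\rangle$. The standard flags $F^A,F^B,F^C$ are compatible with this splitting, in the sense that each $F^A_i$ is the direct sum of its intersections with the pieces. The restricted flags on $V_S$ are full flags of $V_S\cap A$, $V_S\cap B$, $V_S\cap C$, ordered by the left-to-right position of the boxes, and $M_\eta=\bigoplus_S(M_\eta\cap V_S)$. Hence $(F^A,F^B,F^C,M_\eta)$ decomposes as a direct sum of representations. The blank boxes give $\mu_1$ summands, and each set $S$ gives a representation with dimension vector $\mu_S$ that is isomorphic, under the order-preserving identification of the boxes of $S$ with the standard basis of $V_\mu$, to $(\text{std flags},M_\mu)$. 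This uses the naming convention for $x,y,z,x',\dots$, which matches the $\varepsilon$'s in the table after permuting rows.

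By Krull--Schmidt and Theorem \ref{theorem:finiteness.criterion}(2), the orbit of $M_\eta$ is the one attached to the decomposition encoded by $\eta$. The map $\eta\mapsto B_L\cdot M_\eta$ is therefore the composition of the bijection from the combinatorial step with the bijection $\Phi^{-1}$, and so it is a bijection.
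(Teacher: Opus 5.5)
Your proposal is correct and follows the paper's route: you match decompositions of $\lambda$ into the summands of Theorem~\ref{T:indecomposables} with colored diagrams, and conclude via Corollary~\ref{C:orbits} and Proposition~\ref{proposition:orbits.in.xfv.and.RepQ'}. You also spell out two steps the paper only asserts: the stabilizer check that each $M_\mu$ of Table~\ref{list.indecomposables:Gr(2;n)} lies in the dense $B(\mu)$-orbit (it does, since the stabilizers reduce to the scalars), and the splitting of $(F^A,F^B,F^C,M_\eta)$ as a direct sum over the colored sets and the blank boxes.
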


\printbibliography

@article {Avdeev.Petukhov.2014,
    AUTHOR = {Avdeev, R. S. and Petukhov, A. V.},
     TITLE = {Spherical actions on flag varieties},
   JOURNAL = {Mat. Sb.},
  FJOURNAL = {Matematicheski\u{\i} Sbornik},
    VOLUME = {205},
      YEAR = {2014},
    NUMBER = {9},
     PAGES = {3--48},
      ISSN = {0368-8666},
   MRCLASS = {14L30 (14M15 14M27)},
  MRNUMBER = {3288423},
MRREVIEWER = {Dmitry A. Timash\"{e}v},
       DOI = {10.1070/sm2014v205n09abeh004416},
}

@article {MWZ.2000,
    AUTHOR = {Magyar, Peter and Weyman, Jerzy and Zelevinsky, Andrei},
     TITLE = {Symplectic multiple flag varieties of finite type},
   JOURNAL = {J. Algebra},
  FJOURNAL = {Journal of Algebra},
    VOLUME = {230},
      YEAR = {2000},
    NUMBER = {1},
     PAGES = {245--265},
      ISSN = {0021-8693},
     CODEN = {JALGA4},
   MRCLASS = {14M15 (14L30 16G20)},
  MRNUMBER = {MR1774766 (2001i:14064)},
MRREVIEWER = {Dmitri I. Panyushev},
}

@article {MWZ.1999,
    AUTHOR = {Magyar, Peter and Weyman, Jerzy and Zelevinsky, Andrei},
     TITLE = {Multiple flag varieties of finite type},
   JOURNAL = {Adv. Math.},
  FJOURNAL = {Advances in Mathematics},
    VOLUME = {141},
      YEAR = {1999},
    NUMBER = {1},
     PAGES = {97--118},
      ISSN = {0001-8708},
     CODEN = {ADMTA4},
   MRCLASS = {14M15 (16G20)},
  MRNUMBER = {MR1667147 (99m:14095)},
MRREVIEWER = {Michel Brion},
}

@preamble{
   "\def\cprime{$'$} "
}

@article {Matsuki.2015,
    AUTHOR = {Matsuki, Toshihiko},
     TITLE = {Orthogonal multiple flag varieties of finite type {I}: {O}dd
              degree case},
   JOURNAL = {J. Algebra},
  FJOURNAL = {Journal of Algebra},
    VOLUME = {425},
      YEAR = {2015},
     PAGES = {450--523},
      ISSN = {0021-8693},
   MRCLASS = {14M15 (14L30 20G15)},
  MRNUMBER = {3295993},
MRREVIEWER = {Justin Brown},
       DOI = {10.1016/j.jalgebra.2014.11.016},
}

@article{Matsuki.arXiv2019,
  author = {Matsuki, Toshihiko},
  title = {{Orthogonal multiple flag varieties of finite type II : even degree case}},
  publisher = {arXiv},
  year = {2019},
  copyright = {arXiv.org perpetual, non-exclusive license},
  doi = {10.48550/ARXIV.1903.06335},
  journal={{\upshape\texttt{arXiv:1903.06335}}},
}

@preamble{
   "\def\Dbar{\leavevmode\lower.6ex\hbox to 0pt{\hskip-.23ex
    \accent"16\hss}D} "
}

@article {Kac.Quiver.1980,
    AUTHOR = {Kac, V. G.},
     TITLE = {Infinite root systems, representations of graphs and invariant
              theory},
   JOURNAL = {Invent. Math.},
  FJOURNAL = {Inventiones Mathematicae},
    VOLUME = {56},
      YEAR = {1980},
    NUMBER = {1},
     PAGES = {57--92},
      ISSN = {0020-9910},
   MRCLASS = {16A64 (05C99 14D25 17B20 17B65 17B70)},
  MRNUMBER = {557581},
MRREVIEWER = {C. M. Ringel},
       DOI = {10.1007/BF01403155},
}

@article{Homma.2021,
  title={Double Flag Varieties and Representations of Quivers},
  author={Homma, Hiroki},
  journal={{\upshape\texttt{arXiv: 2103.\-14509}}},
  year={2021}
}

@misc{Fresse.Nishiyama.Overview.2023,
      title={Overview on the theory of double flag varieties for symmetric pairs}, 
      author={Lucas Fresse and Kyo Nishiyama},
      year={2023},
      eprint={2309.17085},
      archivePrefix={arXiv},
      primaryClass={math.RT}
}

@article {Duckworth,
    AUTHOR = {Duckworth, W. Ethan},
     TITLE = {A classification of certain finite double coset collections in
              the classical groups},
   JOURNAL = {Bull. London Math. Soc.},
  FJOURNAL = {The Bulletin of the London Mathematical Society},
    VOLUME = {36},
      YEAR = {2004},
    NUMBER = {6},
     PAGES = {758--768},
      ISSN = {0024-6093,1469-2120},
   MRCLASS = {20G15 (14L35)},
  MRNUMBER = {2083751},
MRREVIEWER = {Dan\ Edidin},
       DOI = {10.1112/S0024609304003546},
       URL = {https://doi.org/10.1112/S0024609304003546},
}

\end{document}